\documentclass[11pt]{article}
\usepackage{amsmath,amsthm,amssymb,bbm}
\usepackage{fullpage}
\usepackage{authblk} 
\usepackage{color}
\title{\textbf{A Weighted Spectral Quantum Fidelity}}

\author[1]{Cong Trinh Le\thanks{lecongtrinh@qnu.edu.vn}}
\author[2]{The Khoi Vu\thanks{vtkhoi@math.ac.vn}}
\author[3]{Minh Toan Ho\thanks{hmtoan@math.ac.vn}}
\author[4]{Trung Hoa Dinh\thanks{Corresponding author. Email: thdinh@troy.edu}}
\affil[1]{Department of Mathematics and Statistics, Quy Nhon University, Quy Nhon, Vietnam}
\affil[2]{Institute of Mathematics, VAST, 18 Hoang Quoc Viet, Hanoi,  Vietnam}

\affil[3]{Institute of Mathematics, VAST, 18 Hoang Quoc Viet, Hanoi,  Vietnam}
\affil[4]{Department of Mathematics and Statistics, Troy University, Troy, USA}

\date{} 

\newtheorem{theorem}{Theorem}[section]
\newtheorem{definition}{Definition}[section]

\newtheorem{proposition}[theorem]{Proposition}
\newtheorem{corollary}[theorem]{Corollary}
\newtheorem{remark}[theorem]{Remark}

\newcommand{\Tr}{\operatorname{Tr}}
\newcommand{\sharpop}{\mathbin{\sharp}}
\newcommand{\naturalop}{\mathbin{\natural}}

\newcommand{\supp}{\operatorname{supp}}

\newcommand{\keywords}[1]{\par\smallskip\noindent\textbf{Keywords: }#1\par}

\begin{document}
\maketitle

\begin{abstract}
 
We introduce and study a one-parameter family of fidelity-type quantities based on the weighted spectral geometric mean, which we call the \emph{weighted spectral fidelity}
\(
\mathsf{F}_t^{\mathrm{spec}}(\rho,\sigma):=\Tr\!\big[\rho(\rho^{-1}\sharp\sigma)^{2t}\big],\ t\in[0,1].
\)
This family interpolates smoothly between the trivial overlap ($t=0,1$) and the Uhlmann (root) fidelity at $t=\tfrac12$, and it is distinct from the sandwiched Rényi family except at this midpoint. We establish core structural features-unitary invariance, tensor stabilization and multiplicativity, flip symmetry, endpoint behavior, and a orthogonality criterion. We further show explicit \emph{violations of DPI} for generic $t\neq\tfrac12$. For concavity in the state variables we obtain concavity in each variable separately. Closed forms are obtained for pure states and for qubits in Bloch coordinates. We also extend the first Fuchs--van de Graaf inequality to $\mathsf{F}_t^{\mathrm{spec}}$ for all $t\in[0,1]$, while the second inequality fails away from the midpoint.  

\end{abstract}


\keywords{Uhlmann fidelity, Matsumoto fidelity, spectral geometric mean, concavity, log-convexity,  qubits}

\section{Introduction}

Quantifying the similarity between quantum states is a fundamental task in quantum information theory, with applications ranging from quantum hypothesis testing and error correction to quantum cryptography and quantum algorithms. A variety of fidelity-type measures have been introduced to formalize this notion of similarity. Among them, the \emph{Uhlmann fidelity}~\cite{Uhlmann76}, also called the transition probability, has emerged as one of the most widely used. It admits multiple equivalent formulations, such as
{\color{blue} \[
\mathsf{F}_{\mathrm{U}}(\rho,\sigma)= \Tr\sqrt{\rho^{1/2}\sigma\rho^{1/2}},
\] }
and satisfies a desirable list of axioms including symmetry, unitary invariance, monotonicity under quantum channels, and joint concavity~\cite{Jozsa94}. These properties make it an indispensable tool in modern quantum theory.

Beyond Uhlmann’s fidelity, other measures have been studied. Holevo introduced an alternative quantity in the 1970s, sometimes called the \emph{just-as-good fidelity} or \emph{Holevo fidelity}~\cite{Holevo72}, which was later rediscovered under different names including the pretty-good fidelity~\cite{BarnumKnill02}, the $A$-fidelity~\cite{ChenAlbeverioFei02}, and the affinity~\cite{NielsenChuang00}. A comprehensive overview of such fidelity measures is provided by Liang et al.~\cite{LiangEtAl19}. Each fidelity encapsulates a different operational or geometric perspective on quantum state similarity. For instance, the Uhlmann fidelity is linked to quantum-to-classical measurement scenarios, while Holevo’s fidelity connects to certain recovery tasks~\cite{Wilde18}.

In 2010, Matsumoto introduced yet another fidelity function, which can be expressed compactly in terms of the matrix geometric mean:
\[
\mathsf{F}_{\mathrm{M}}(\rho,\sigma)=\Tr(\rho\sharp\sigma),
\qquad
\rho\sharp\sigma:=\rho^{1/2}\!\left(\rho^{-1/2}\sigma\rho^{-1/2}\right)^{1/2}\!\rho^{1/2},
\]
for invertible density operators~\cite{Matsumoto10,Matsumoto11,Matsumoto13}. This measure, now known as the \emph{Matsumoto fidelity}, satisfies many of the same axioms as Uhlmann’s fidelity and Holevo’s fidelity, but offers a dual interpretation: it can be viewed as the maximum classical fidelity associated with classical-to-quantum preparation procedures~\cite{CreeSikora20}. Despite its elegance, the Matsumoto fidelity remained relatively underexplored until the more recent analysis of Cree and Sikora~\cite{CreeSikora20}, who studied it through the lens of semidefinite programming and provided a geometric interpretation in terms of the Riemannian metric on the cone of positive definite matrices.

These developments highlight the central role of the matrix geometric mean~\cite{AndoHiai10,Bhatia07} in bridging linear algebra, operator theory, and quantum information. It appears not only in the definition of Matsumoto fidelity, but also in alternative characterizations of Uhlmann fidelity (e.g.\ Alberti’s variational principle~\cite{Alberti83}) and in inequalities fundamental to quantum information~\cite{Ando79}. Building on this framework, we introduce and analyze a \emph{spectral fidelity family}:
\[
\mathsf{F}_t^{\mathrm{spec}}(\rho,\sigma)=\Tr\!\big(\rho(\rho^{-1}\sharp\sigma)^{2t}\big),
\qquad t\in[0,1],
\]
which interpolates between trivial overlaps and Uhlmann fidelity at $t=\tfrac12$. This family inherits many desirable structural properties such as unitary invariance, tensor product multiplicativity, concavity, log-convexity in $t$, and a natural flip symmetry
\(
\mathsf{F}_t^{\mathrm{spec}}(\rho,\sigma)=\mathsf{F}_{1-t}^{\mathrm{spec}}(\sigma,\rho).
\)

Our aim in this work is to place $\mathsf{F}_t^{\mathrm{spec}}$ in the broader landscape of quantum fidelity measures, to establish its structural properties rigorously, and to highlight connections to Ulhmann and Matsumoto fidelities.  In particular, we show how the parameter $t$ interpolates between different operational regimes.

\medskip

 In Section~\ref{sec:prelim} we recall basic definitions and properties of matrix and spectral geometric means.
Section~\ref{sec:properties} introduces the weighted spectral fidelity $\mathsf{F}_t^{\mathrm{spec}}$ and establishes its fundamental structural properties, including flip symmetry, unitary and tensor invariances, multiplicativity, endpoint behavior, and the orthogonality (zero-support) criterion. Section~\ref{sec:qubit-comparison} derives closed forms of $\mathsf{F}_t^{\mathrm{spec}}$ when one state is pure and presents Bloch-sphere expressions for qubit systems.
Section~\ref{sec:concavity-dpi} analyzes concavity and log-convexity using operator perspective techniques and Hadamard's three-lines theorem, proving \emph{separate concavity in each state variable} for all $t\in[0,1]$ and \emph{log-convexity in the parameter $t$}.  
We provide explicit counterexamples showing that the \emph{data processing inequality fails} for all $t\neq \tfrac{1}{2}$. Section~\ref{sec:fvg} extends the first Fuchs--van de Graaf inequality to all $t\in[0,1]$ and shows the second fails away from the midpoint.

\section{Preliminaries}\label{sec:prelim}

Let $\mathbb{M}_n$ denote the algebra of $n\times n$ complex matrices, and let $\mathbb{P}_n$ denote the cone of positive definite matrices in $\mathbb{M}_n$. For $A,B\in \mathbb{P}_n$, the \emph{matrix geometric mean} was introduced by Pusz and Woronowicz~\cite{PuszWoronowicz75} in 1975 as  
\[
A \sharp B \;=\; A^{1/2}\bigl(A^{-1/2}BA^{-1/2}\bigr)^{1/2}A^{1/2}.
\]
This mean is uniquely characterized as the positive solution $X$ of the Riccati equation
\[
XA^{-1}X = B.
\]

\smallskip

In 1997, Fiedler and Pták~\cite{FiedlerPtak97} proposed the \emph{spectral geometric mean}, defined by  
\[
A \natural B \;=\; (A^{-1}\sharp B)^{1/2}\,A\,(A^{-1}\sharp B)^{1/2}.
\]
{\color{blue} The terminology ``spectral mean'' originates from a fundamental eigenvalue property of this
construction. A key feature is that $(A \natural B)^2$ is similar to the product $AB$. Indeed, using
the identity
\[
A^{-1}\sharp B
= A^{-1/2}\,(A^{1/2}BA^{1/2})^{1/2}\,A^{-1/2},
\]
one checks directly that
\[
(A\natural B)^2
\sim A^{1/2}(A^{-1/2}BA^{-1/2})A^{1/2} = AB,
\]
where $\sim$ denotes similarity of matrices. Since similarity preserves eigenvalues, it follows that
\[
\lambda\bigl((A\natural B)^2\bigr)=\lambda(AB).
\]
As $A\natural B$ is positive definite, its eigenvalues are therefore the positive square roots of the
eigenvalues of $AB$, namely
\[
\lambda(A\natural B)=\sqrt{\lambda(AB)}.
\]
This eigenvalue correspondence is the primary motivation for the term ``spectral mean.''

}  Moreover, the factor $A^{-1}\sharp B$ has a variational characterization: it is the unique minimizer of  
\[
F(X)=\operatorname{Tr}(AX)+\operatorname{Tr}(BX^{-1}), \qquad X>0,
\]
see, e.g.,~\cite{Bhatia07}.

\smallskip  
In 2015, Kim and Lee~\cite{KimLee2015} introduced the \emph{weighted spectral mean}, given by  
\[
A \natural_t B \;=\; (A^{-1}\sharp B)^t \, A \, (A^{-1}\sharp B)^t, \qquad t\in[0,1],
\]
which provides a continuous interpolation between $A$ and $B$ through spectral means. This family has subsequently been linked to the study of relative operator entropy and to several operator inequalities~\cite{tam2,KimLee2015,Kim2021}.

{\color{blue} 
\begin{definition}[Weighted spectral fidelity]
For density operators $\rho,\sigma$, we define the \emph{weighted spectral fidelity} by
\[
\mathsf{F}_t^{\mathrm{spec}}(\rho,\sigma)
:=\Tr(\rho\natural_t\sigma)
=\Tr\!\big(\rho\,(\rho^{-1}\sharp\sigma)^{2t}\big),
\qquad t\in[0,1].
\]
\end{definition}

\begin{remark}
Throughout the paper, the superscript ``$\mathrm{spec}$'' emphasizes that the quantity
$\mathsf{F}_t^{\mathrm{spec}}(\rho,\sigma)$ is a fidelity functional derived from the spectral
geometric mean, and should not be confused with the operator mean $\natural_t$ itself.
\end{remark}
}

The quantity $\mathsf{F}_t^{\mathrm{spec}}(\rho,\sigma)$ should not be confused with the \emph{sandwiched Rényi divergence}~\cite{MuellerReeb16, Beigi13}. For $\alpha>0$, $\alpha\neq 1$, the sandwiched Rényi relative entropy is defined by  
\[
\widetilde{D}_{\alpha}(\rho\|\sigma)
= \frac{1}{\alpha-1}\log\,
\Tr\!\left[
\Big( \sigma^{\tfrac{1-\alpha}{2\alpha}} \rho \,
\sigma^{\tfrac{1-\alpha}{2\alpha}} \Big)^{\alpha}
\right].
\]
In the special case $\alpha=\tfrac12$, one has
\[
\widetilde{D}_{1/2}(\rho\|\sigma) \;=\; -2\log \Tr(\rho^{1/2}\sigma^{1/2}),
\]
so that the sandwiched Rényi divergence reduces to minus twice the logarithm of the Uhlmann fidelity. By contrast, our weighted fidelity is defined as  
\[
\mathsf{F}_t^{\mathrm{spec}}(\rho,\sigma) \;=\; 
\Tr\!\left[\rho\,(\rho^{-1}\sharp\sigma)^{2t}\right],
\]
which at $t=\tfrac12$ coincides with the Uhlmann fidelity. For general values of $t$, however, the two constructions differ: sandwiched Rényi divergences interpolate via conjugation by powers of $\sigma$, while the weighted spectral fidelity interpolates through Kubo--Ando operator means. Thus the two families intersect at the midpoint but otherwise yield distinct one-parameter generalizations of quantum fidelity.

\section{Properties of the weighted spectral fidelity}\label{sec:properties}
This section records properties of the weighted spectral fidelity that mirror those of the Uhlmann fidelity. Throughout, $\rho$ and $\sigma$ denote density operators (positive semidefinite with unit trace), representing quantum states.

\begin{proposition}[Endpoint values and the Uhlmann point]\label{prop:endpoints}
For all density matrices $\rho,\sigma$,
\[
\mathsf{F}_0^{\mathrm{spec}}(\rho,\sigma)=\mathsf{F}_1^{\mathrm{spec}}(\rho,\sigma)=1,
\qquad
\mathsf{F}_{1/2}^{\mathrm{spec}}(\rho,\sigma)=\Tr\sqrt{\rho^{1/2}\sigma\rho^{1/2}}=F(\rho,\sigma).
\]
\end{proposition}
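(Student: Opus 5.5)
The plan is to work first with invertible $\rho,\sigma$, where $G:=\rho^{-1}\sharp\sigma$ is a well-defined positive definite matrix, and to compute the trace $\Tr(\rho G^{2t})$ directly at the three values $t=0,\tfrac12,1$.

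\textbf{Endpoint $t=0$.} Here $G^{0}=I$, so $\mathsf{F}_0^{\mathrm{spec}}(\rho,\sigma)=\Tr\rho=1$.

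\textbf{Endpoint $t=1$.} Here we need $\Tr(\rho G^2)=\Tr(G\rho G)$. The Riccati characterization recalled in Section~\ref{sec:prelim} says that $X=A\sharp B$ is the unique positive solution of $XA^{-1}X=B$. Applying it with $A=\rho^{-1}$ and $B=\sigma$ gives $G\rho G=\sigma$. Hence $\mathsf{F}_1^{\mathrm{spec}}(\rho,\sigma)=\Tr\sigma=1$.

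\textbf{Midpoint $t=\tfrac12$.} We need $\Tr(\rho G)$. Use the identity $\rho^{-1}\sharp\sigma=\rho^{-1/2}(\rho^{1/2}\sigma\rho^{1/2})^{1/2}\rho^{-1/2}$ stated in Section~\ref{sec:prelim}, with $A=\rho$. By cyclicity of the trace,
\[
\Tr(\rho G)=\Tr\big(\rho^{1/2}\rho^{-1/2}(\rho^{1/2}\sigma\rho^{1/2})^{1/2}\rho^{-1/2}\rho^{1/2}\big)=\Tr\sqrt{\rho^{1/2}\sigma\rho^{1/2}},
\]
which is the Uhlmann fidelity. If one prefers not to rely on that identity, it can be checked directly: $Y=\rho^{-1/2}(\rho^{1/2}\sigma\rho^{1/2})^{1/2}\rho^{-1/2}$ is positive and satisfies $Y\rho Y=\sigma$, so uniqueness in the Riccati equation identifies $Y$ with $G$.

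\textbf{Singular states (main obstacle).} When $\rho$ or $\sigma$ is not invertible, $\rho^{-1}$ is undefined, and the definition must be read through a convention, either a limit or a support restriction. My first choice is the limiting convention: set $\rho_\varepsilon=(1-\varepsilon)\rho+\varepsilon I/n$ and similarly $\sigma_\varepsilon$. The three identities above hold exactly for every $\varepsilon>0$. Taking $\varepsilon\to0$, the values $1,1$ and $\Tr\sqrt{\rho_\varepsilon^{1/2}\sigma_\varepsilon\rho_\varepsilon^{1/2}}$ converge to $1,1$ and $F(\rho,\sigma)$, by continuity of the square root and of $\rho\mapsto\rho^{1/2}$ on positive semidefinite matrices. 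So the statement holds for whatever limiting definition the paper adopts, provided that definition agrees with the regularized values, which it does by construction. The only delicate point is confirming that the paper's extension to singular states is indeed this limit; under a support-restricted convention the same computations go through on the support.
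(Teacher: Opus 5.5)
Your proof is correct for invertible states and is essentially the paper's: $t=0$ is immediate, $t=\tfrac12$ is the same cyclicity computation with $M=\rho^{1/2}\sigma\rho^{1/2}$, and at $t=1$ your Riccati identity $G\rho G=\sigma$ is a compact version of the paper's explicit expansion of $(\rho^{-1}\sharp\sigma)^2$. Your regularization paragraph goes beyond the paper, which silently assumes invertibility.

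One correction concerns your last sentence. Under a support-restricted convention the $t=1$ computation does \emph{not} go through. Let $P$ be the support projection of $\rho$ and use generalized inverses. Then $G\rho G=\rho^{-1/2}M\rho^{-1/2}=P\sigma P$, so $\mathsf{F}_1^{\mathrm{spec}}(\rho,\sigma)=\Tr(P\sigma)$, which is $<1$ unless $\supp\sigma\subseteq\supp\rho$. This is exactly the value $p$ that the paper's own pure-state formula $p^{t}$ gives at $t=1$. So for singular states the endpoint identity genuinely requires your limiting convention, not a support restriction.
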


\begin{proof}
$\mathsf{F}_0^{\mathrm{spec}}=\Tr\rho=1$. For $t=1$, write with $M:=\rho^{1/2}\sigma\rho^{1/2}$,
\[
\rho^{-1}\sharp\sigma=\rho^{-1/2}M^{1/2}\rho^{-1/2},
\quad
(\rho^{-1}\sharp\sigma)^2=\rho^{-1/2}M^{1/2}\rho^{-1}M^{1/2}\rho^{-1/2}.
\]
Then
\[
\mathsf{F}_1^{\mathrm{spec}}=\Tr\big(\rho(\rho^{-1}\sharp\sigma)^2\big)
=\Tr\big(M^{1/2}\rho^{-1}M^{1/2}\big)
=\Tr\big(\rho^{-1/2}M\rho^{-1/2}\big)
=\Tr\sigma=1.
\]
For $t=\tfrac12$,
\[
\mathsf{F}_{1/2}^{\mathrm{spec}}=\Tr\big(\rho(\rho^{-1}\sharp\sigma)\big)
=\Tr\Big(\rho\,\rho^{-1/2}M^{1/2}\rho^{-1/2}\Big)
=\Tr\big(M^{1/2}\big)=F(\rho,\sigma).
\]
\end{proof}

\begin{proposition}[Convexity in $t$ and the universal bound]\label{prop:convex}
For fixed $\rho,\sigma$, the map $t\mapsto \mathsf{F}_t^{\mathrm{spec}}(\rho,\sigma)$ is convex on $[0,1]$. In particular,
\[
0< \mathsf{F}_t^{\mathrm{spec}}(\rho,\sigma)\le 1\qquad\text{for all } {\color{blue}t\in(0,1]},
\]
with equality $\mathsf{F}_t^{\mathrm{spec}}=1$ for some $t\in(0,1)$ iff $\rho=\sigma$.
\end{proposition}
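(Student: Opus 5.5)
Set $G:=\rho^{-1}\sharp\sigma$ (assume first $\rho,\sigma$ invertible) and diagonalize $G=\sum_j g_j P_j$ with $g_j>0$ and spectral projections $P_j$. Then
\[
\mathsf{F}_t^{\mathrm{spec}}(\rho,\sigma)=\sum_j \Tr(\rho P_j)\,g_j^{2t}=\sum_j w_j\, e^{2t\log g_j},\qquad w_j:=\Tr(\rho P_j)\ge 0 .
\]
This is a nonnegative combination of exponentials in $t$. Hence it is convex, and in fact log-convex, as a function of $t$. Positivity for $t\in(0,1]$ also follows: $\sum_j w_j=\Tr\rho=1$, so some $w_j>0$, and every term is strictly positive.

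The upper bound comes from convexity together with Proposition~\ref{prop:endpoints}. Since $\mathsf{F}_0^{\mathrm{spec}}=\mathsf{F}_1^{\mathrm{spec}}=1$, the convex function lies below the chord joining $(0,1)$ and $(1,1)$, so $\mathsf{F}_t^{\mathrm{spec}}\le 1$ on $[0,1]$.

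For the equality case we use strict convexity. If $G$ is not a scalar on the support of the weights, i.e.\ at least two distinct $g_j$ carry positive weight $w_j$, then
\[
\frac{d^2}{dt^2}\mathsf{F}_t^{\mathrm{spec}}=\sum_j 4w_j(\log g_j)^2g_j^{2t}
\]
and the function is strictly convex. Strict convexity makes the chord inequality strict in the interior, so $\mathsf{F}_t^{\mathrm{spec}}<1$ for $t\in(0,1)$.

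Now suppose equality holds at some interior $t$. Then only one value $g$ of the spectrum carries weight. Since $\rho>0$ forces every $w_j>0$, this means $G=gI$. The endpoint condition $\sum_j w_j g_j^2=1$ then gives $g=1$, so $G=I$. From the Riccati characterization, $X=\rho^{-1}\sharp\sigma$ solves $X\rho X=\sigma$, and $X=I$ gives $\sigma=\rho$. Conversely, if $\rho=\sigma$ then $G=\rho^{-1}\sharp\rho=I$, so $\mathsf{F}_t^{\mathrm{spec}}\equiv 1$.

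The main obstacle is the non-invertible case, where $\rho^{-1}$ is not defined. I would handle it the way the paper implicitly does, in one of two ways:
\begin{itemize}
\item Interpret $\rho^{-1}$ on $\supp\rho$ via the formula $\rho^{-1}\sharp\sigma=\rho^{-1/2}(\rho^{1/2}\sigma\rho^{1/2})^{1/2}\rho^{-1/2}$ with generalized inverses. The spectral argument above then goes through with $w_j$ computed on the support, and the only new issue is zero eigenvalues $g_j=0$.
\item Alternatively, take limits $\rho_\varepsilon=(1-\varepsilon)\rho+\varepsilon I/n$. Convexity and the bound $\le 1$ survive pointwise limits.
\end{itemize}
Strict positivity and the equality characterization, however, need separate care. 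Positivity requires that some $w_j>0$ with $g_j>0$. This I would obtain from $\mathsf{F}_{1/2}^{\mathrm{spec}}=F(\rho,\sigma)$ and log-convexity (a log-convex function that vanishes at one interior point would have to vanish identically). The remaining case is orthogonal supports, where $F(\rho,\sigma)=0$, and this must be checked directly against whatever convention for $\rho^{-1}$ is adopted. For the equality case with singular $\rho$, the single-weighted-eigenvalue condition only forces $G$ to be scalar on the relevant support. One must then verify that $\sigma=\rho$ still follows, using $\Tr\sigma=1$ and the $t=1$ computation in Proposition~\ref{prop:endpoints}.
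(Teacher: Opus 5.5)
Your argument is correct for invertible $\rho,\sigma$, which is also all the paper's proof really covers, since it uses $\log(\rho^{-1}\sharp\sigma)$. Convexity and the bound $\mathsf{F}_t^{\mathrm{spec}}\le 1$ are the paper's argument in spectral coordinates. Your $\sum_j 4w_j(\log g_j)^2g_j^{2t}$ is exactly $4\Tr(\rho Y^2e^{2tY})$ with $Y=\log G$, followed by the same chord comparison. Your form also gives log-convexity for free, which the paper later obtains from the three-lines theorem.

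You diverge from the paper in the equality case. The paper notes that a convex $f$ with $f(0)=f(1)=1$ that equals $1$ at an interior point must be identically $1$. This gives $\mathsf{F}_{1/2}^{\mathrm{spec}}=F(\rho,\sigma)=1$, and the paper then cites Uhlmann's characterization of unit fidelity. You instead use strict convexity to force $G=gI$, the endpoint value to get $g=1$, and the Riccati identity $G\rho G=\sigma$ to conclude.

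Each route buys something different:
\begin{itemize}
\item Your route is self-contained, with no appeal to Uhlmann's theorem, and it gives the converse explicitly.
\item The paper's final step is valid for all density operators, so it needs no support bookkeeping. Yours needs the extra step you anticipate for singular $\rho$.
\end{itemize}
Your sketch of that extra step is right. Let $P_\rho$ be the projection onto $\supp\rho$. With generalized inverses, $G\rho G=P_\rho\sigma P_\rho$, so $G=P_\rho$ only yields $P_\rho\sigma P_\rho=\rho$. It is $\Tr\sigma=1$ that then forces $\sigma=\rho$.

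On the singular case, be decisive rather than saying orthogonal supports ``must be checked'': the strict lower bound is simply false there. If $\rho\sigma=0$, then $\rho^{1/2}\sigma\rho^{1/2}=0$, so $G=0$ and $\mathsf{F}_t^{\mathrm{spec}}=0$ for all $t>0$. Compare Proposition~\ref{lem:zero}, or the pure-state formula $p^t$ with $p=0$. Your regularization $\rho_\varepsilon,\sigma_\varepsilon$ also gives the limit $0$ on $(0,1)$, because orthogonal states commute.

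Your reduction is exactly the correct statement. On $(0,1]$ the function is a nonnegative sum of exponentials, so it vanishes at one point iff it vanishes everywhere, iff $F(\rho,\sigma)=0$. Hence $\mathsf{F}_t^{\mathrm{spec}}>0$ iff $\rho\sigma\neq 0$, and the paper's ``positivity is clear'' misses this.

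Similarly, with generalized inverses one gets $\mathsf{F}_1^{\mathrm{spec}}=\Tr(P_\rho\sigma)$, which can be $<1$. So Proposition~\ref{prop:endpoints} cannot be quoted verbatim there. Only $\mathsf{F}_1^{\mathrm{spec}}\le 1$ is available, but that still suffices for both the chord bound and the equality argument.
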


\begin{proof}
Set $Y:=\log(\rho^{-1}\sharp\sigma)$. Then
\[
\mathsf{F}_t^{\mathrm{spec}}=\Tr\!\big(\rho\,e^{\,2tY}\big),\quad
\frac{d}{dt}\mathsf{F}_t^{\mathrm{spec}}=2\,\Tr\!\big(\rho\,Ye^{\,2tY}\big),\quad
\frac{d^2}{dt^2}\mathsf{F}_t^{\mathrm{spec}}=4\,\Tr\!\big(\rho\,Y^2e^{\,2tY}\big)\ge0,
\]
since $Y^2e^{\,2tY}\ge0$ and $\rho\ge0$. Hence  $\mathsf{F}_t^{\mathrm{spec}}$ is convex. By Proposition~\ref{prop:endpoints},
$\mathsf{F}_0^{\mathrm{spec}}=\mathsf{F}_1^{\mathrm{spec}}=1$, so convexity implies $\mathsf{F}_t^{\mathrm{spec}}\le1$ on $[0,1]$.
Positivity is clear. If $\mathsf{F}_{t_0}^{\mathrm{spec}}=1$ for some $t_0\in(0,1)$, convexity with equal endpoint values forces $\mathsf{F}_t^{\mathrm{spec}}\equiv1$ on $[0,1]$, hence $\mathsf{F}_{1/2}^{\mathrm{spec}}=1$. By Uhlmann’s characterization, this occurs iff $\rho=\sigma$.
\end{proof}


\begin{proposition}[Flip symmetry]
For $t\in[0,1]$ and density operators $\rho,\sigma$,
\[
\rho\naturalop_t\sigma \;=\; \sigma\naturalop_{1-t}\rho,
\qquad\text{hence}\qquad
\mathsf{F}_t^{\mathrm{spec}}(\rho,\sigma)=\mathsf{F}_{1-t}^{\mathrm{spec}}(\sigma,\rho).
\]
\end{proposition}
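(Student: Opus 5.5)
The plan is to prove the operator identity $\rho\naturalop_t\sigma=\sigma\naturalop_{1-t}\rho$ directly. The trace identity then follows by taking traces. Throughout, assume $\rho,\sigma$ are invertible, as the definition of $\rho^{-1}\sharpop\sigma$ requires. Set $X:=\rho^{-1}\sharpop\sigma$.

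First I record the basic facts about $X$. From the Riccati characterization recalled in Section~\ref{sec:prelim}, $X$ is the unique positive definite solution of $X\rho X=\sigma$. Indeed, $X(\rho^{-1})^{-1}X=\sigma$. This gives
\[
\sigma = X\rho X, \qquad \rho = X^{-1}\sigma X^{-1}.
\]
Next I identify $\sigma^{-1}\sharpop\rho$. The matrix $Y=X^{-1}$ is positive definite and satisfies $Y\sigma Y = X^{-1}(X\rho X)X^{-1}=\rho$. This is exactly the Riccati equation $Y(\sigma^{-1})^{-1}Y=\rho$. By uniqueness of its positive solution,
\[
\sigma^{-1}\sharpop\rho=(\rho^{-1}\sharpop\sigma)^{-1}=X^{-1}.
\]
This uniqueness-based identification is the only step with real content. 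The main care needed is to check that the Riccati uniqueness applies with the roles of the arguments set correctly, namely $A=\sigma^{-1}$ and $B=\rho$.

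Now I compute $\sigma\naturalop_{1-t}\rho$ using the functional calculus of the single positive operator $X$:
\[
\sigma\naturalop_{1-t}\rho=(X^{-1})^{1-t}\,\sigma\,(X^{-1})^{1-t}
= X^{t-1}\,(X\rho X)\,X^{t-1}=X^{t}\rho X^{t}=\rho\naturalop_t\sigma .
\]
This step uses that all powers of $X$ commute with one another, so that $X^{t-1}X=X^t$. Taking traces yields $\mathsf{F}_t^{\mathrm{spec}}(\rho,\sigma)=\Tr(\rho\naturalop_t\sigma)=\Tr(\sigma\naturalop_{1-t}\rho)=\mathsf{F}_{1-t}^{\mathrm{spec}}(\sigma,\rho)$.

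If singular states are to be covered, I would define both sides by the limit $\rho_\varepsilon=(1-\varepsilon)\rho+\varepsilon I/n$, and likewise for $\sigma_\varepsilon$. The identity holds for each $\varepsilon>0$, so it passes to the limit whenever the limits exist. I expect this remark to suffice, since the paper works with invertible states.

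As a consistency check, the case $t=0$ gives $\rho=\sigma\naturalop_1\rho$. Together with the endpoint values of Proposition~\ref{prop:endpoints}, this confirms the identity at the endpoints.
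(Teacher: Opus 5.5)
Your proof is correct and follows the paper's argument: you identify $\sigma^{-1}\sharp\rho$ with $(\rho^{-1}\sharp\sigma)^{-1}$ and then compute $X^{t-1}(X\rho X)X^{t-1}=X^{t}\rho X^{t}$. The only difference is cosmetic: you get $Y=X^{-1}$ from uniqueness of the positive solution of $Y\sigma Y=\rho$, whereas the paper derives it from $(A\sharp B)^{-1}=A^{-1}\sharp B^{-1}$ together with the symmetry $A\sharp B=B\sharp A$.
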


\begin{proof}
Recall the definition of the spectral interpolation
\[
\rho\naturalop_t\sigma \;:=\; X^t\,\rho\,X^t,
\quad\text{where }X:=\rho^{-1}\sharpop\sigma,
\]
and, symmetrically,
\[
\sigma\naturalop_{1-t}\rho \;:=\; Y^{\,1-t}\,\sigma\,Y^{\,1-t},
\quad\text{where }Y:=\sigma^{-1}\sharpop\rho.
\]

Firstly, we show that $X$ and $Y$ are mutual inverses. Indeed, the Kubo--Ando geometric mean is characterized by the Riccati-type identity
\[
(A^{-1}\sharpop B)\; A \; (A^{-1}\sharpop B)\;=\; B
\qquad(A,B>0),
\]
and by symmetry, $A\sharpop B=B\sharpop A$. Applying this with $(A,B)=(\rho,\sigma)$ and $(A,B)=(\sigma,\rho)$ gives
\[
X\,\rho\,X=\sigma,
\qquad
Y\,\sigma\,Y=\rho.
\]
Moreover, \((A\sharpop B)^{-1}=A^{-1}\sharpop B^{-1}\). Hence
\[
X^{-1}=(\rho^{-1}\sharpop\sigma)^{-1}=\rho\sharpop\sigma^{-1}
=\sigma^{-1}\sharpop\rho=Y.
\]

Using $X\rho X=\sigma$ and $Y=X^{-1}$, we compute
\[
\sigma\naturalop_{1-t}\rho
=Y^{\,1-t}\,\sigma\,Y^{\,1-t}
=X^{\,t-1}\,\sigma\,X^{\,t-1}
=X^{\,t-1}\,(X\rho X)\,X^{\,t-1}
=X^t\,\rho\,X^t
=\rho\naturalop_t\sigma.
\]
Therefore, taking the trace on both sides of the operator identity yields
\[
\mathsf{F}_t^{\mathrm{spec}}(\rho,\sigma)
=\Tr\!\big(\rho\naturalop_t\sigma\big)
=\Tr\!\big(\sigma\naturalop_{1-t}\rho\big)
=\mathsf{F}_{1-t}^{\mathrm{spec}}(\sigma,\rho).
\]
\end{proof}


\begin{proposition}[Multiplicativity, or,  ``additivity'' in the $\log$]
\label{lem:multiplicativity}
For $t\in[0,1]$ and density matrices $\rho_i,\sigma_i$ $(i=1,2)$,
\[
\mathsf{F}_t^{\mathrm{spec}}(\rho_1\otimes\rho_2,\ \sigma_1\otimes\sigma_2)
\;=\;
\mathsf{F}_t^{\mathrm{spec}}(\rho_1,\sigma_1)\,\mathsf{F}_t^{\mathrm{spec}}(\rho_2,\sigma_2).
\]
Equivalently, $-\log \mathsf{F}_t^{\mathrm{spec}}$ is additive under tensor products.
\end{proposition}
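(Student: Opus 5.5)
The plan is to prove the operator identity
\[
(\rho_1\otimes\rho_2)\naturalop_t(\sigma_1\otimes\sigma_2)=(\rho_1\naturalop_t\sigma_1)\otimes(\rho_2\naturalop_t\sigma_2),
\]
and then take traces, using $\Tr(A\otimes B)=\Tr A\,\Tr B$. As in the rest of this section, we work with invertible (faithful) states so that $\rho^{-1}$ is defined. For the degenerate case we would either adopt whatever support convention the paper uses, or pass to the limit $\rho_i+\varepsilon I$, which is harmless since both sides are continuous in the states on the positive definite cone.

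\textbf{Step 1: the geometric mean factorizes over tensor products.} We first show that for $A_i,B_i>0$,
\[
(A_1\otimes A_2)\sharpop(B_1\otimes B_2)=(A_1\sharpop B_1)\otimes(A_2\sharpop B_2).
\]
The cleanest route is the Riccati characterization from Section~\ref{sec:prelim}: $A\sharpop B$ is the unique positive definite $X$ with $XA^{-1}X=B$. Put $X:=(A_1\sharpop B_1)\otimes(A_2\sharpop B_2)$. It is positive definite, and by the mixed-product rule together with $(A_1\otimes A_2)^{-1}=A_1^{-1}\otimes A_2^{-1}$,
\[
X(A_1\otimes A_2)^{-1}X=\big((A_1\sharpop B_1)A_1^{-1}(A_1\sharpop B_1)\big)\otimes\big((A_2\sharpop B_2)A_2^{-1}(A_2\sharpop B_2)\big)=B_1\otimes B_2 .
\]
Uniqueness then gives the claim. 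We apply it with $A_i=\rho_i^{-1}$ and $B_i=\sigma_i$, so that $(\rho_1\otimes\rho_2)^{-1}\sharpop(\sigma_1\otimes\sigma_2)=X_1\otimes X_2$, where $X_i:=\rho_i^{-1}\sharpop\sigma_i$.

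\textbf{Step 2: real powers factorize.} We need $(X_1\otimes X_2)^{s}=X_1^{s}\otimes X_2^{s}$ for real $s$ and positive definite $X_i$. Take spectral decompositions $X_i=\sum_j\lambda^{(i)}_jP^{(i)}_j$. Then $X_1\otimes X_2=\sum_{j,k}\lambda^{(1)}_j\lambda^{(2)}_k\,P^{(1)}_j\otimes P^{(2)}_k$, where the $P^{(1)}_j\otimes P^{(2)}_k$ are mutually orthogonal projections summing to the identity. Applying the function $x\mapsto x^{s}$ through this decomposition, and using $(\lambda\mu)^s=\lambda^s\mu^s$ for positive numbers, gives the identity. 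Equivalently, $\log(X_1\otimes X_2)=\log X_1\otimes I+I\otimes\log X_2$, and these two summands commute.

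\textbf{Step 3: conclusion.} With $s=t$ we get
\[
(X_1\otimes X_2)^t(\rho_1\otimes\rho_2)(X_1\otimes X_2)^t=(X_1^t\rho_1X_1^t)\otimes(X_2^t\rho_2X_2^t).
\]
Taking the trace yields $\mathsf{F}_t^{\mathrm{spec}}(\rho_1,\sigma_1)\,\mathsf{F}_t^{\mathrm{spec}}(\rho_2,\sigma_2)$. By Proposition~\ref{prop:convex} both factors are positive, so the logarithm may be taken, and this gives the additivity of $-\log\mathsf{F}_t^{\mathrm{spec}}$.

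The only step with real content is Step 1. If one prefers the explicit formula to the Riccati uniqueness argument, the check is a direct computation: powers of tensor products factor by Step 2, so each of $A^{1/2}$, $A^{-1/2}BA^{-1/2}$ and the outer square root splits factorwise. The main obstacle is therefore not algebraic but the treatment of singular states, which the limiting argument handles.
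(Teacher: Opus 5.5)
Your proof is correct and follows the same route as the paper: factor the geometric mean over the tensor product to get $X_1\otimes X_2$, split the real power by functional calculus, and use that the trace of a tensor product is the product of the traces. The differences are minor: you verify the tensor compatibility of $\sharp$ via Riccati uniqueness, whereas the paper cites it as a standard fact. You also work with $X^t\rho X^t$ instead of $\rho X^{2t}$, which gives the slightly stronger operator identity $(\rho_1\otimes\rho_2)\natural_t(\sigma_1\otimes\sigma_2)=(\rho_1\natural_t\sigma_1)\otimes(\rho_2\natural_t\sigma_2)$.
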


\begin{proof}
 We  use two standard facts for positive operators:
\begin{itemize}
\item[(i)] \emph{Tensor compatibility of the geometric mean}:
\(
(A\otimes C)\sharpop(B\otimes D)=(A\sharpop B)\otimes(C\sharpop D).
\)
\item[(ii)] \emph{Functional calculus on tensor products}:
For $X,Y\ge 0$ and $\alpha\in\mathbb{R}$,
\(
(X\otimes Y)^{\alpha}=X^{\alpha}\otimes Y^{\alpha}.
\)
\end{itemize}
Set $X_i:=\rho_i^{-1}\sharpop\sigma_i\ge 0$.
Then, by (i),
\[
(\rho_1\otimes\rho_2)^{-1}\sharpop(\sigma_1\otimes\sigma_2)
=(\rho_1^{-1}\sharpop\sigma_1)\otimes(\rho_2^{-1}\sharpop\sigma_2)
=X_1\otimes X_2.
\]
Applying (ii) with $\alpha=2t$,
\[
\big((\rho_1\otimes\rho_2)^{-1}\sharpop(\sigma_1\otimes\sigma_2)\big)^{2t}
=(X_1\otimes X_2)^{2t}
=X_1^{2t}\otimes X_2^{2t}.
\]
Therefore
\[
\begin{aligned}
\mathsf{F}_t^{\mathrm{spec}}(\rho_1\otimes\rho_2,\sigma_1\otimes\sigma_2)
&=\Tr\!\Big((\rho_1\otimes\rho_2)\,(X_1^{2t}\otimes X_2^{2t})\Big)\\
&=\Tr\!\big(\rho_1 X_1^{2t}\big)\;\Tr\!\big(\rho_2 X_2^{2t}\big)
=\mathsf{F}_t^{\mathrm{spec}}(\rho_1,\sigma_1)\,\mathsf{F}_t^{\mathrm{spec}}(\rho_2,\sigma_2),
\end{aligned}
\]
as claimed.
\end{proof}


\begin{proposition}[Invariances]
For $t\in[0,1]$, the functional
\[
\mathsf{F}_t^{\mathrm{spec}}(\rho,\sigma)\;=\;\Tr\!\big(\rho\,(\rho^{-1}\sharp\sigma)^{2t}\big)
\]
is invariant under unitary conjugations and tensor stabilizations: for any unitary $U$ and any density matrix $\tau$,
\[
\mathsf{F}_t^{\mathrm{spec}}(U\rho U^*,\,U\sigma U^*) \;=\; \mathsf{F}_t^{\mathrm{spec}}(\rho,\sigma), 
\qquad 
\mathsf{F}_t^{\mathrm{spec}}(\rho\otimes\tau,\,\sigma\otimes\tau) \;=\; \mathsf{F}_t^{\mathrm{spec}}(\rho,\sigma).
\]
\end{proposition}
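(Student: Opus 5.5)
For unitary invariance, the plan is to show that every operation in the definition commutes with conjugation by $U$. First, $(U\rho U^*)^{-1}=U\rho^{-1}U^*$, and continuous functional calculus commutes with unitary conjugation: $f(UAU^*)=Uf(A)U^*$ for positive definite $A$. In particular this holds for $f(x)=x^{1/2}$, $x^{-1/2}$ and $x^{2t}$. Substituting into
\[
A\sharp B=A^{1/2}\bigl(A^{-1/2}BA^{-1/2}\bigr)^{1/2}A^{1/2}
\]
then gives $(UAU^*)\sharp(UBU^*)=U(A\sharp B)U^*$. Applying this with $A=\rho^{-1}$ and $B=\sigma$, and then taking the $2t$-th power, yields
\[
\bigl((U\rho U^*)^{-1}\sharp U\sigma U^*\bigr)^{2t}=U(\rho^{-1}\sharp\sigma)^{2t}U^*.
\]
Cyclicity of the trace, together with $U^*U=I$, finishes this part:
\[
\Tr\bigl(U\rho U^*\,U X^{2t}U^*\bigr)=\Tr(\rho X^{2t}).
\]

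For tensor stabilization, I would not redo the computation. Instead I would invoke Proposition~\ref{lem:multiplicativity} with $\rho_1=\rho$, $\sigma_1=\sigma$ and $\rho_2=\sigma_2=\tau$. This gives
\[
\mathsf{F}_t^{\mathrm{spec}}(\rho\otimes\tau,\sigma\otimes\tau)=\mathsf{F}_t^{\mathrm{spec}}(\rho,\sigma)\,\mathsf{F}_t^{\mathrm{spec}}(\tau,\tau).
\]
It then suffices to show $\mathsf{F}_t^{\mathrm{spec}}(\tau,\tau)=1$. From the Riccati characterization, $X=\tau^{-1}\sharp\tau$ is the positive solution of $X\tau X=\tau$, so $X=I$. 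More directly, the formula $A^{-1}\sharp B=A^{-1/2}(A^{1/2}BA^{1/2})^{1/2}A^{-1/2}$ from Section~\ref{sec:prelim} gives $\tau^{-1/2}(\tau^2)^{1/2}\tau^{-1/2}=I$. Hence $\mathsf{F}_t^{\mathrm{spec}}(\tau,\tau)=\Tr(\tau I^{2t})=\Tr\tau=1$.

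The only point needing care is invertibility, which I expect to be the main obstacle. The definition uses $\rho^{-1}$ and $\tau^{-1}$, so, as elsewhere in this section, I will work with faithful (invertible) density operators. I note that $\rho\otimes\tau$ is invertible exactly when both $\rho$ and $\tau$ are. If the paper intends a support-restricted or limiting definition for singular states, I would extend by continuity: approximate $\tau$ by $(1-\varepsilon)\tau+\varepsilon I/d$, and similarly for $\rho$ and $\sigma$, and pass to the limit $\varepsilon\to 0$. The identities above hold for every $\varepsilon$, so they survive the limit. Everything else is routine functional calculus.
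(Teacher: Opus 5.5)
Your proof is correct and follows the paper's route. Unitary invariance comes from congruence invariance of $\sharp$, functional calculus and cyclicity of the trace; tensor stabilization comes from tensor compatibility of $\sharp$ together with $\tau^{-1}\sharp\tau=I$, which you merely route through Proposition~\ref{lem:multiplicativity} and $\mathsf{F}_t^{\mathrm{spec}}(\tau,\tau)=1$ (the same computation).

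Like the paper's proof, yours really covers only invertible states, and your closing ``extend by continuity'' remark would need more justification. Two things stand in the way: $\rho_\varepsilon\otimes\tau_\varepsilon$ is not the regularization of $\rho\otimes\tau$, and $\mathsf{F}_t^{\mathrm{spec}}$ is not known to extend continuously to singular states. Indeed, the paper's support-restricted pure-state value $p^t$ at $t=1$ differs from every regularized limit, which equals $1$.
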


\begin{proof}
We use two standard facts for positive definite matrices $A,B$:
\begin{itemize}
\item[(i)] \emph{Congruence invariance of the geometric mean}: For any invertible $C$,
\[
(C^*AC)\,\sharp\,(C^*BC) \;=\; C^*(A\sharp B)C.
\]
In particular, for a unitary $U$, $(U^*AU)\sharp(U^*BU)=U^*(A\sharp B)U$.
\item[(ii)] \emph{Tensor product compatibility}: For positive $A,B,C,D$,
\[
(A\otimes C)\,\sharp\,(B\otimes D)\;=\;(A\sharp B)\otimes(C\sharp D).
\]
\end{itemize}

We also use that for $X\ge0$ and unitary $U$, the functional calculus gives
\[
(UXU^*)^{\alpha}\;=\;U X^{\alpha} U^*,\qquad \alpha\in\mathbb{R},
\]
and the cyclic/unitary invariance of the trace.

\smallskip
\noindent\textbf{Unitary invariance.}
Let $U$ be unitary. By (i) and functional calculus,
\[
\big((U\rho U^*)^{-1}\sharp (U\sigma U^*)\big)^{2t}
=\big(U(\rho^{-1}\sharp\sigma)U^*\big)^{2t}
=U(\rho^{-1}\sharp\sigma)^{2t}U^*.
\]
Hence
\[
\begin{aligned}
\mathsf{F}_t^{\mathrm{spec}}(U\rho U^*,U\sigma U^*)
&=\Tr\!\Big(U\rho U^*\,\big((U\rho U^*)^{-1}\sharp (U\sigma U^*)\big)^{2t}\Big)\\
&=\Tr\!\big(U\rho U^*\,U(\rho^{-1}\sharp\sigma)^{2t}U^*\big)\\
&=\Tr\!\big(\rho\,(\rho^{-1}\sharp\sigma)^{2t}\big)
=\mathsf{F}_t^{\mathrm{spec}}(\rho,\sigma).
\end{aligned}
\]

\smallskip
\noindent\textbf{Tensor stabilization.}
Let $\tau$ be a density operator (so $\Tr\tau=1$). Using (ii) with $C=\tau^{-1}$ and $D=\tau$,
\[
\big((\rho\otimes\tau)^{-1}\sharp(\sigma\otimes\tau)\big)
=(\rho^{-1}\otimes\tau^{-1})\sharp(\sigma\otimes\tau)
=(\rho^{-1}\sharp\sigma)\otimes(\tau^{-1}\sharp\tau).
\]
Since $\tau^{-1}\sharp\tau=I$, we get
\[
\big((\rho\otimes\tau)^{-1}\sharp(\sigma\otimes\tau)\big)^{2t}
=(\rho^{-1}\sharp\sigma)^{2t}\otimes I.
\]
Therefore,
\[
\begin{aligned}
\mathsf{F}_t^{\mathrm{spec}}(\rho\otimes\tau,\sigma\otimes\tau)
&=\Tr\!\Big((\rho\otimes\tau)\, \big((\rho^{-1}\sharp\sigma)^{2t}\otimes I\big)\Big)\\
&=\Tr\!\big(\rho(\rho^{-1}\sharp\sigma)^{2t}\big)\,\Tr(\tau)\\
&=\mathsf{F}_t^{\mathrm{spec}}(\rho,\sigma).
\end{aligned}
\]
This completes the proof.
\end{proof}

\begin{proposition}[Orthogonality/zero condition]
\label{lem:zero}
Let $\rho,\sigma$ be density matrices and $t\in(0,1]$. Then
\[
\mathsf{F}_t^{\mathrm{spec}}(\rho,\sigma)=\Tr\!\big(\rho\,(\rho^{-1}\sharpop\sigma)^{2t}\big)=0
\quad\Longleftrightarrow\quad
\supp\rho\;\cap\;\supp\sigma=\{0\}.
\]
(Equivalently, $\rho\sigma=0$.) In particular $\mathsf{F}_t^{\mathrm{spec}}(\rho,\sigma)>0$ iff $\rho$ and $\sigma$ have nontrivial overlapping support. For $t=0$, $\mathsf{F}_0^{\mathrm{spec}}(\rho,\sigma)=\Tr\rho=1$.
\end{proposition}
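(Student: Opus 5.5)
The strategy is to rewrite $\mathsf{F}_t^{\mathrm{spec}}(\rho,\sigma)$ as the trace of a positive operator, so that it vanishes exactly when that operator is zero, and then to trace this down to $\rho\sigma=0$. Since the statement concerns states with possibly disjoint supports, $\rho$ cannot be assumed invertible. I would first fix the convention used for singular $\rho$: inverses and negative powers of $\rho$ are taken on $\supp\rho$, as Moore--Penrose inverses. Then, with $M:=\rho^{1/2}\sigma\rho^{1/2}$,
\[
X:=\rho^{-1}\sharp\sigma=\rho^{-1/2}M^{1/2}\rho^{-1/2}\ge 0,
\]
which is the same identity used in the proof of Proposition~\ref{prop:endpoints}. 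I would also check that this agrees with the limit of the regularization $\rho_\varepsilon=\rho+\varepsilon I$ as $\varepsilon\to0$, at least for the trace functional. I expect this to be the main technical obstacle. If the limit statement is delicate, I would state the result for the support-restricted definition, which is how the paper implicitly treats singular states.

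With this convention, cyclicity of the trace gives
\[
\mathsf{F}_t^{\mathrm{spec}}(\rho,\sigma)=\Tr\big(X^t\rho X^t\big)=\big\|\rho^{1/2}X^t\big\|_2^2 .
\]
Hence $\mathsf{F}_t^{\mathrm{spec}}(\rho,\sigma)=0$ if and only if $X^t\rho^{1/2}=0$.

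\emph{($\Leftarrow$).} Assume $\rho\sigma=0$. Then $\sigma^{1/2}\rho^{1/2}=0$, so $M=0$ and therefore $X=0$. For $t>0$ this gives $X^t=0$, and so $\mathsf{F}_t^{\mathrm{spec}}(\rho,\sigma)=0$.

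\emph{($\Rightarrow$).} Assume $X^t\rho^{1/2}=0$.
\begin{enumerate}
\item For $t\in(0,1]$, the positive operators $X^t$ and $X$ have the same kernel, so $X\rho^{1/2}=0$.
\item Since $\rho^{-1/2}\rho^{1/2}$ is the projection $P$ onto $\supp\rho$, this reads $\rho^{-1/2}M^{1/2}P=0$. The range of $M^{1/2}$ lies in $\supp\rho$, and $M^{1/2}=PM^{1/2}P$.
\item The map $\rho^{-1/2}$ is injective on $\supp\rho$, so $M^{1/2}=0$. Hence $M=0$.
\item From $M=0$ we get $\|\sigma^{1/2}\rho^{1/2}\|_2^2=\Tr M=0$, so $\sigma^{1/2}\rho^{1/2}=0$ and thus $\rho\sigma=0$.
\end{enumerate}

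Finally, $\rho\sigma=0$ is equivalent to $\supp\rho\perp\supp\sigma$. The condition ``$\supp\rho\cap\supp\sigma=\{0\}$'' in the statement should therefore be read as orthogonality of the supports, matching the parenthetical ``$\rho\sigma=0$''. Mere trivial intersection of two non-orthogonal subspaces does not force $M=0$, so it is not sufficient. The claim $\mathsf{F}_t^{\mathrm{spec}}>0$ otherwise follows immediately. The case $t=0$ is just $\Tr\rho=1$, as in Proposition~\ref{prop:endpoints}. The positivity $t>0$ is used exactly once, in the kernel identity $\ker X^t=\ker X$.
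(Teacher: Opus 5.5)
Your argument is correct and proves the right version of the statement, but by a different route from the paper. The paper never looks at $M=\rho^{1/2}\sigma\rho^{1/2}$. It invokes the support identity $\supp(A\sharp B)=\supp A\cap\supp B$, together with $\supp\rho^{-1}=\supp\rho$, to get $\supp X=\supp\rho\cap\supp\sigma$, and reads both directions off from that. This identity holds for the Kubo--Ando mean defined by monotone limits. It does not hold for the support-restricted formula $X=\rho^{-1/2}M^{1/2}\rho^{-1/2}$ that you adopt, which is also the formula the paper uses in Proposition~\ref{prop:endpoints} and in the pure-state closed forms.

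Take $\rho=|0\rangle\langle 0|$ and $\sigma=|+\rangle\langle+|$. The supports meet only in $0$, yet $X=2^{-1/2}\rho\neq 0$ and $\mathsf{F}_t^{\mathrm{spec}}(\rho,\sigma)=2^{-t}$, which is exactly the paper's own $p^{t}$. So the paper's ($\Leftarrow$) direction, read with trivial intersection, is false under its working convention. The Kubo--Ando limit convention cannot rescue it: there $X=|0\rangle\langle0|\sharp|+\rangle\langle+|=0$, so $\mathsf{F}_{1/2}^{\mathrm{spec}}$ would vanish for this pair, contradicting the Uhlmann identity. Your chain $X\rho^{1/2}=0\Rightarrow\rho^{-1/2}M^{1/2}=0\Rightarrow M=0\Rightarrow\rho\sigma=0$ bypasses the support lemma and yields the orthogonality characterization directly. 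You should write this counterexample out explicitly to justify correcting ``$\supp\rho\cap\supp\sigma=\{0\}$'' to $\rho\sigma=0$.

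Drop the proposed check against the regularization $\rho_\varepsilon$, because it fails. For the same pair, $\rho_\varepsilon=(1-\varepsilon)\rho+\varepsilon I/2$ gives $\rho_\varepsilon^{-1}\sharp\sigma=\sqrt2\,|+\rangle\langle+|$ for every $\varepsilon>0$. The resulting value is $2^{t-1}$ rather than $2^{-t}$.

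Worse, for invertible $\rho_\varepsilon$ the computation in Proposition~\ref{prop:endpoints} gives $\mathsf{F}_1^{\mathrm{spec}}(\rho_\varepsilon,\sigma)=\Tr\sigma=1$. So under a regularized convention the case $t=1$ of the statement fails for orthogonal states. Under your convention, $\mathsf{F}_1^{\mathrm{spec}}(\rho,\sigma)=\Tr(P\sigma)$, with $P$ the projection onto $\supp\rho$, and this vanishes precisely when $\rho\sigma=0$. The support-restricted definition is therefore forced rather than a fallback, and you should fix it from the outset.
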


\begin{proof}
Write $X:=\rho^{-1}\sharpop\sigma\ge0$. Then
\[
\rho\naturalop_t\sigma \;=\; X^{t}\rho X^{t}\ \ge \ 0,
\qquad
\mathsf{F}_t^{\mathrm{spec}}(\rho,\sigma)\;=\;\Tr(\rho\naturalop_t\sigma).
\]
Since $\rho\naturalop_t\sigma$ is positive semidefinite, $\Tr(\rho\naturalop_t\sigma)=0$ iff $\rho\naturalop_t\sigma=0$.

\smallskip {($\Longrightarrow$)} Assume $\mathsf{F}_t^{\mathrm{spec}}(\rho,\sigma)=0$. Then $X^{t}\rho X^{t}=0$, hence
$X^{t}$ vanishes on $\supp\rho$. Because $t>0$, $\supp(X^{t})=\supp(X)$. Therefore
\[
\supp(X)\cap\supp\rho=\{0\}.
\]
For positive operators, $\supp(A\sharpop B)=\supp A\cap\supp B$. Here $\supp(\rho^{-1})=\supp\rho$, so
\[
\supp(X)=\supp(\rho^{-1}\sharpop\sigma)=\supp\rho\cap\supp\sigma.
\]
Thus, $\supp\rho\cap\supp\sigma=\{0\}$.

\smallskip  {($\Longleftarrow  $)} Conversely, if $\supp\rho\cap\supp\sigma=\{0\}$ then
$\supp(\rho^{-1}\sharpop\sigma)=\{0\}$ by the same support identity, so $X=0$ and hence
$\rho\naturalop_t\sigma=X^{t}\rho X^{t}=0$. Therefore $\mathsf{F}_t^{\mathrm{spec}}(\rho,\sigma)=0$.

\smallskip
Finally, $\mathsf{F}^{\mathrm{spec}}_0(\rho,\sigma)=\Tr\rho=1$ is immediate from the definition, so the only way to get zero is as above with $t>0$.
\end{proof}

For $0\le t\le \tfrac12$ we will use Ando's block characterization to obtain a convenient
variational form.

\begin{theorem}[Variational form for $t\le \tfrac12$]\label{thm:variational}
For density operators $\rho,\sigma\ge 0$, define
\[
\mathcal{K}(\rho,\sigma)
:=\Big\{\,T\ge 0:\ 
\begin{bmatrix}\rho^{-1}&T\\[2pt] T&\sigma\end{bmatrix}\ge 0\Big\}.
\]
Then for all $0\le t\le\tfrac12$,
\[
\mathsf{F}_t^{\mathrm{spec}}(\rho,\sigma)
=\max_{T\in\mathcal{K}(\rho,\sigma)}\Tr\!\big(\rho\,T^{2t}\big)
=\max_{T\in\mathcal{K}(\rho,\sigma)}\big\|T^{t}\rho^{1/2}\big\|_2^{2}.
\]
Moreover, one maximizer is $T^\star=\rho^{-1}\#\sigma$; if {\color{blue} $\rho > 0$}, it is the unique maximizer.
\end{theorem}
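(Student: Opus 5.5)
Our plan rests on Ando's characterization of the geometric mean as a maximum. For $A,B>0$, the $2\times2$ block matrix $\begin{bmatrix}A&T\\ T&B\end{bmatrix}$ with $T$ Hermitian is positive semidefinite iff $T\le A\sharp B$ in the Loewner order (for $T\ge0$ this is Ando's theorem; the block condition is equivalent to $B\ge TA^{-1}T$, i.e.\ $(A^{-1/2}TA^{-1/2})^2\le A^{-1/2}BA^{-1/2}$, and operator monotonicity of the square root gives $A^{-1/2}TA^{-1/2}\le (A^{-1/2}BA^{-1/2})^{1/2}$). We apply this with $A=\rho^{-1}$ and $B=\sigma$. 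Then $\mathcal{K}(\rho,\sigma)=\{T\ge0:\ T\le \rho^{-1}\sharp\sigma\}$, and $T^\star:=\rho^{-1}\sharp\sigma$ is the largest element of $\mathcal{K}(\rho,\sigma)$. Membership of $T^\star$ also follows directly from the Riccati identity $T^\star\rho T^\star=\sigma$ recalled in the flip-symmetry proof, since the Schur complement vanishes. In the non-invertible case we first assume $\rho,\sigma>0$ and then pass to the limit (or restrict to $\supp\rho$, interpreting $\rho^{-1}$ as a generalized inverse), exactly as the definition of $\mathsf{F}_t^{\mathrm{spec}}$ implicitly does.

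Next comes monotonicity of the objective. For $0\le t\le\tfrac12$ we have $2t\in[0,1]$, so $x\mapsto x^{2t}$ is operator monotone by the Löwner--Heinz inequality. Hence $0\le T\le T^\star$ implies $T^{2t}\le (T^\star)^{2t}$. Pairing with $\rho\ge0$ gives $\Tr(\rho T^{2t})\le \Tr(\rho (T^\star)^{2t})=\mathsf{F}_t^{\mathrm{spec}}(\rho,\sigma)$, and equality is attained at $T^\star$. This proves the first equality. The second equality is just the identity $\Tr(\rho T^{2t})=\Tr(\rho^{1/2}T^tT^t\rho^{1/2})=\|T^t\rho^{1/2}\|_2^2$. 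The restriction $t\le\tfrac12$ is used exactly here: for $2t>1$ the power is not operator monotone, so the argument breaks down.

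The main obstacle is uniqueness when $\rho>0$. Suppose $T\in\mathcal{K}$ attains the maximum. Then $\Tr\big(\rho\,((T^\star)^{2t}-T^{2t})\big)=0$ with $(T^\star)^{2t}-T^{2t}\ge0$ and $\rho>0$, which forces $T^{2t}=(T^\star)^{2t}$. For $t>0$, taking the $1/(2t)$-th power yields $T=T^\star$. At $t=0$ the objective is identically $\Tr\rho=1$, so uniqueness genuinely fails there. We therefore state the uniqueness claim for $0<t\le\tfrac12$ and note the degenerate endpoint separately; the statement's ``one maximizer is $T^\star$'' remains true at $t=0$. The only other delicate point is the support and limiting argument for singular $\rho$. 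There we would restrict to $\supp\rho$ and use continuity of $\sharp$ under monotone limits.
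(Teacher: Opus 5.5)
For $\rho>0$ your argument is the paper's own: Ando's maximality of $T^\star=\rho^{-1}\sharp\sigma$, L\"owner--Heinz for $x\mapsto x^{2t}$, positivity of $X\mapsto\Tr(\rho X)$, and the Hilbert--Schmidt identity. It is correct, and it uses only $\rho>0$, with $\sigma\ge0$ arbitrary. Your uniqueness argument (faithfulness of $X\mapsto\Tr(\rho X)$ on $X\ge0$ when $\rho>0$, then injectivity of $x\mapsto x^{2t}$ for $t>0$) is the correct form of the paper's one-line claim. That claim asserts strict inequality for every $T\le T^\star$ without excluding $T=T^\star$. You are also right that the uniqueness clause fails at $t=0$: $T^\star/2\in\mathcal{K}(\rho,\sigma)$ because $\sigma-\tfrac14T^\star\rho T^\star=\tfrac34\sigma\ge0$, and it has the same objective value as $T^\star$.

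One claim should be withdrawn. The block condition is \emph{not} equivalent to $T\le A\sharp B$, not even for $T\ge0$. Your Schur-complement computation proves only ``$\Rightarrow$''; the converse would need $x\mapsto x^2$ to be operator monotone. For example, with $\rho=\tfrac12 I$ one has $\mathcal{K}(\rho,\sigma)=\{T\ge0:\ T^2\le2\sigma\}$, which is generally a proper subset of $\{0\le T\le(2\sigma)^{1/2}\}$. So $\mathcal{K}(\rho,\sigma)=\{0\le T\le T^\star\}$ is false, as it is in the paper. It is harmless, though, because you only use $\mathcal{K}(\rho,\sigma)\subseteq\{T\le T^\star\}$ and $T^\star\in\mathcal{K}(\rho,\sigma)$, and you prove both.

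The genuine gap is the singular case, and neither route you sketch closes it.

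\emph{Generalized-inverse route.} Reading $\rho^{-1}$ as the generalized inverse $\rho^{+}$ makes the identity false. Take $\rho=|\psi\rangle\langle\psi|$ and $\sigma=|\phi\rangle\langle\phi|$ with $0<|\langle\psi|\phi\rangle|<1$, so $\rho^{+}=\rho$. Positivity of $\begin{bmatrix}\rho&T\\ T&\sigma\end{bmatrix}$ forces $T=\rho K\sigma^{1/2}$ with $\|K\|\le1$. Since $T=T^*$, this gives $\operatorname{ran}T\subseteq\operatorname{ran}\rho\cap\operatorname{ran}\sigma=\{0\}$. Hence $\mathcal{K}(\rho,\sigma)=\{0\}$ and the right-hand side is $0$ for $t>0$. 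But $\mathsf{F}_{1/2}^{\mathrm{spec}}(\rho,\sigma)=\Tr\sqrt{\rho^{1/2}\sigma\rho^{1/2}}=|\langle\psi|\phi\rangle|>0$, and the paper's pure-state formula gives $|\langle\psi|\phi\rangle|^{2t}$.

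\emph{Limiting route.} This yields only $\mathsf{F}_t^{\mathrm{spec}}(\rho_\varepsilon,\sigma_\varepsilon)=\max_{\mathcal{K}(\rho_\varepsilon,\sigma_\varepsilon)}\Tr(\rho_\varepsilon T^{2t})$ for each $\varepsilon$. In the same example at $t=\tfrac12$, these maxima are Uhlmann fidelities converging to $|\langle\psi|\phi\rangle|$, not to the value $0$ over $\mathcal{K}(\rho,\sigma)$. The maximizers need not converge either: for $\sigma>0$, $\det(\rho_\varepsilon^{-1}\sharp\sigma)=(\det\sigma/\det\rho_\varepsilon)^{1/2}\to\infty$. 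Continuity of $\sharp$ under monotone limits does not apply, since $\rho_\varepsilon^{-1}$ has no bounded limit.

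The paper's ``the formula persists on the supports'' has exactly the same defect, so this is not a step you missed relative to the paper. But what both arguments actually establish is the theorem for $\rho>0$, and that is how it should be stated.
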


\begin{proof}
We assume {\color{blue} $\rho,\sigma > 0$}. For general $\rho,\sigma\ge 0$, replace them by
$\rho_\varepsilon=(1-\varepsilon)\rho+\varepsilon I/d$ and
$\sigma_\varepsilon=(1-\varepsilon)\sigma+\varepsilon I/d$,
  and let $\varepsilon\downarrow0$.
All maps involved are continuous in the strong topology, so the formula persists on the supports of $\rho$ and $\sigma$.    

A classical result of Ando (see, e.g.,~\cite{Ando79}) states that for positive definite $A,B$ and self-adjoint $X$,
\[
\begin{bmatrix}A & X\\ X & B\end{bmatrix}\ge 0
\quad\Longleftrightarrow\quad
\exists\,K:\|K\|\le1,\ X=A^{1/2}KB^{1/2}.
\]
If, in addition, $X\ge 0$, then $0\le  X\le  A\#B$.  
Applying this with $A=\rho^{-1}$ and $B=\sigma$ gives
\[
\mathcal{K}(\rho,\sigma)
=\{\,T:\ 0\le  T\le \rho^{-1}\#\sigma\,\}.
\]
On the other hand,  for $0\le2t\le1$, the function $f(x)=x^{2t}$ is operator monotone increasing and operator concave in {\color{blue} $(0, \infty)$}. Hence if $0\le  T_1\le  T_2$, then $T_1^{2t}\le  T_2^{2t}$, and for any $\rho\ge 0$,
\[
\Tr(\rho T_1^{2t})\le \Tr(\rho T_2^{2t}).
\]
Therefore, the maximum is attained at the top element $T^\star=\rho^{-1}\#\sigma$, giving
\[
\max_{T\in\mathcal{K}(\rho,\sigma)}\Tr(\rho T^{2t})
=\Tr\!\big(\rho\,(\rho^{-1}\#\sigma)^{2t}\big)
=\mathsf{F}_t^{\mathrm{spec}}(\rho,\sigma).
\]

\textbf{Uniqueness.}
If $\rho > 0$, strict operator monotonicity implies that for $T\le \rho^{-1}\#\sigma$,
$T^{2t}\le (\rho^{-1}\#\sigma)^{2t}$ and hence
$\Tr(\rho T^{2t})<\Tr(\rho(\rho^{-1}\#\sigma)^{2t})$.
Thus the maximizer is unique.  
When $\rho$ is singular, any $T$ coinciding with $\rho^{-1}\#\sigma$ on $\operatorname{supp}\rho$ yields the same trace, so uniqueness may fail only outside $\operatorname{supp}\rho$.

\textbf{Hilbert–Schmidt form.}
For any $T\ge 0$,
\[
\Tr(\rho T^{2t})
=\Tr\!\big(\rho^{1/2}T^{2t}\rho^{1/2}\big)
=\Tr\!\big((T^{t}\rho^{1/2})^*(T^{t}\rho^{1/2})\big)
=\|T^{t}\rho^{1/2}\|_2^{2},
\]
yielding the second expression.  
 
\end{proof}

\begin{remark}[Geometric picture]\label{rem:geometry}
The feasible set $\mathcal{K}(\rho,\sigma)$ is a spectrahedron cut out by a single LMI.
Pinching in basis $B$ forces $T$ to be diagonal in $B$, shrinking $\mathcal{K}(\rho,\sigma)$
to $\mathcal{K}\big(\Pi_B(\rho),\Pi_B(\sigma)\big)$, the diagonal slice
$0\le T=\mathrm{diag}(t_i)\le \mathrm{diag}\big(\sqrt{\sigma'_{ii}/\rho'_{ii}}\big)$.
For concave objectives $T\mapsto \Tr(\rho T^{2t})$ ($t\le\tfrac12$) the extremum is attained at the
\emph{non-diagonal} extreme point $T^\star=\rho^{-1}\#\sigma$; forcing $T$ to be diagonal can strictly
\emph{decrease} the optimal value (unless $\rho,\sigma$ commute).
\end{remark}

\section{Closed forms when one state is pure }
\label{sec:qubit-comparison}

Throughout this subsection we take $\rho,\sigma\in\mathbb{C}^{2\times2}$ to be qubit states.
Write the Bloch decompositions
\[
\rho=\tfrac12\big(I+\mathbf{r}\cdot\boldsymbol{\sigma}\big),\qquad
\sigma=\tfrac12\big(I+\mathbf{s}\cdot\boldsymbol{\sigma}\big),
\quad \mathbf{r},\mathbf{s}\in\mathbb{R}^3,\ \ |\mathbf{r}|,|\mathbf{s}|\le1,
\]
with Pauli vector $\boldsymbol{\sigma}=(\sigma_x,\sigma_y,\sigma_z)$.
We compare three fidelities:
\begin{align*}
&\text{(root) Uhlmann:}&&\mathsf{F}_{\mathrm U}(\rho,\sigma):=\Tr\sqrt{\rho^{1/2}\sigma\rho^{1/2}},\\
&\text{Matsumoto:}&&\mathsf{F}_{\mathrm M}(\rho,\sigma):=\Tr(\rho\sharpop\sigma),\\
&\text{weighted spectral:}&&\mathsf{F}_t^{\mathrm{spec}}(\rho,\sigma):=\Tr\!\big(\rho(\rho^{-1}\sharpop\sigma)^{2t}\big),\quad t\in[0,1].
\end{align*}

\begin{proposition}[Closed forms when one state is pure]
Let $\rho,\sigma$ be density operators and $t\in[0,1]$.

\smallskip
\noindent\emph{(i) $\rho$ pure.} If $\rho=|\psi\rangle\!{\langle\psi|}$ and $p:=\langle\psi|\sigma| \psi  \rangle\in[0,1]$, then
\[
\mathsf{F}_{\mathrm U}(\rho,\sigma)=\sqrt{p},\qquad
\mathsf{F}_{\mathrm M}(\rho,\sigma)=\sqrt{p},\qquad
\mathsf{F}_t^{\mathrm{spec}}(\rho,\sigma)=p^{\,t}.
\]
\emph{(ii) $\sigma$ pure.} If $\sigma=| \phi\rangle\!{\langle\phi|}$ and $q:=\langle {\phi}|\rho|{\phi}\rangle\in[0,1]$, then
\[
\mathsf{F}_{\mathrm U}(\rho,\sigma)=\sqrt{q},\qquad
\mathsf{F}_{\mathrm M}(\rho,\sigma)=\sqrt{q},\qquad
\mathsf{F}_t^{\mathrm{spec}}(\rho,\sigma)=q^{\,1-t}.
\]
\end{proposition}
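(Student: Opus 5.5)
The plan is to compute everything directly for a pure first argument, using the support (Moore--Penrose) convention that the paper implicitly adopts for singular states. Under this convention $\rho^{-1}$, $\rho^{-1/2}$ denote generalized inverses on $\supp\rho$. The geometric mean is evaluated through the formula
\[
A^{-1}\sharp B=A^{-1/2}(A^{1/2}BA^{1/2})^{1/2}A^{-1/2}
\]
recalled in Section~\ref{sec:prelim}, and similarly $\rho\sharp\sigma=\rho^{1/2}(\rho^{-1/2}\sigma\rho^{-1/2})^{1/2}\rho^{1/2}$. Part (ii) will then be reduced to part (i) by symmetry.

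\emph{Part (i).} Let $P:=|\psi\rangle\langle\psi|=\rho$, so that $\rho^{1/2}=\rho^{-1/2}=P$ on $\supp\rho$. The single identity to use is $P\sigma P=p\,P$, with $p=\langle\psi|\sigma|\psi\rangle$.
\begin{itemize}
\item Uhlmann: $\rho^{1/2}\sigma\rho^{1/2}=pP$, hence $\sqrt{\rho^{1/2}\sigma\rho^{1/2}}=\sqrt p\,P$ and $\mathsf F_{\mathrm U}=\sqrt p$.
\item Weighted spectral: $\rho^{-1}\sharp\sigma=P(pP)^{1/2}P=\sqrt p\,P$. Functional calculus on the rank-one projection gives $(\sqrt p\,P)^{2t}=p^{t}P$ for $t>0$, so $\mathsf F^{\mathrm{spec}}_t=\Tr(P\,p^tP)=p^t$. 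The case $t=0$ agrees with Proposition~\ref{prop:endpoints}, where $p^0=1$.
\item Matsumoto: $\rho^{-1/2}\sigma\rho^{-1/2}=pP$, whose square root is $\sqrt p\,P$. Sandwiching by $P$ gives $\rho\sharp\sigma=\sqrt p\,P$, so $\mathsf F_{\mathrm M}=\sqrt p$.
\end{itemize}
If $p=0$, all three values are $0$, which is consistent with Proposition~\ref{lem:zero}.

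\emph{Part (ii).} Let $\sigma=|\phi\rangle\langle\phi|$ with $q=\langle\phi|\rho|\phi\rangle$.
\begin{itemize}
\item Weighted spectral: I would invoke the flip symmetry $\mathsf F_t^{\mathrm{spec}}(\rho,\sigma)=\mathsf F_{1-t}^{\mathrm{spec}}(\sigma,\rho)$ and apply part (i) with the roles exchanged, giving $q^{1-t}$.
\item Uhlmann: this fidelity is symmetric, so part (i) gives $\sqrt q$.
\item Matsumoto: use the symmetry $\rho\sharp\sigma=\sigma\sharp\rho$ of the Kubo--Ando mean, which reduces to the computation in (i).
\end{itemize}

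The main obstacle is justifying these manipulations when a state is singular. The flip-symmetry proof used $X^{-1}=Y$, which needs invertibility. Moreover, the regularization $\rho_\varepsilon$ of Theorem~\ref{thm:variational} need not reproduce the support-convention values: for instance, the genuine limit of $\rho_\varepsilon\sharp\sigma$ is $\langle\psi|\sigma^{-1}|\psi\rangle^{-1/2}P$, not $\sqrt p\,P$.

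To handle this for $\mathsf F^{\mathrm{spec}}_t$ in (ii), I would avoid the limiting argument and verify the claim directly. Assume $\rho>0$ (the support convention otherwise). Then $\rho^{1/2}\sigma\rho^{1/2}=|\rho^{1/2}\phi\rangle\langle\rho^{1/2}\phi|$ has norm $q$, so
\[
\rho^{-1}\sharp\sigma=q^{-1/2}\,\rho^{-1/2}|\rho^{1/2}\phi\rangle\langle\rho^{1/2}\phi|\rho^{-1/2}=q^{-1/2}|\phi\rangle\langle\phi| ,
\]
and therefore $\mathsf F_t^{\mathrm{spec}}=\Tr\big(\rho\,q^{-t}|\phi\rangle\langle\phi|\big)=q^{1-t}$. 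This confirms (ii) without appealing to flip symmetry.

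For the Matsumoto entries, I would state explicitly that they hold under the support-inverse convention written above, rather than as $\varepsilon$-limits.
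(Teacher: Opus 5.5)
Your part (i) is the paper's own argument: the support-inverse convention plus the compression $P\sigma P=pP$. Your direct computation $\rho^{-1}\sharp\sigma=q^{-1/2}|\phi\rangle\langle\phi|$ for $\rho>0$ improves on the paper's route to (ii), which, like your first bullet, applies the flip symmetry outside the invertible setting in which it was proved. Part (ii) still has two genuine gaps, and in both the asserted formula is actually false, so rewording will not fix them.

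\emph{Matsumoto entry.} Your reduction through $\rho\sharp\sigma=\sigma\sharp\rho$ combines the symmetry of the true Kubo--Ando mean with the support-convention surrogate from (i), which is not that mean. This is exactly the discrepancy you found for $\rho_\varepsilon\sharp\sigma$, with the roles swapped. For $\rho>0$ no convention is involved at all. With $u=\rho^{-1/2}\phi$,
\[
\rho\sharp\sigma=\rho^{1/2}\big(|u\rangle\langle u|\big)^{1/2}\rho^{1/2}=\langle\phi|\rho^{-1}|\phi\rangle^{-1/2}\,|\phi\rangle\langle\phi| .
\]
Hence $\mathsf F_{\mathrm M}(\rho,\sigma)=\langle\phi|\rho^{-1}|\phi\rangle^{-1/2}\le\sqrt q$, with equality only when $\phi$ is an eigenvector of $\rho$. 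For example, $\rho=\mathrm{diag}(\tfrac14,\tfrac34)$ and $\phi=(|0\rangle+|1\rangle)/\sqrt2$ give $\sqrt{3/8}$, not $\sqrt{1/2}$. The value $\sqrt q$ arises only from $\sigma^{1/2}(\sigma^{-1/2}\rho\sigma^{-1/2})^{1/2}\sigma^{1/2}$ with generalized inverses of $\sigma$, which is a different object.

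\emph{Spectral entry for singular $\rho$.} The parenthetical ``the support convention otherwise'' fails. Redo your computation with Moore--Penrose powers. Then $\rho^{-1/2}\rho^{1/2}\phi=P_\rho\phi$, where $P_\rho$ is the support projection, so for $q>0$
\[
\rho^{-1}\sharp\sigma=q^{-1/2}\,|P_\rho\phi\rangle\langle P_\rho\phi|,\qquad
\mathsf F_t^{\mathrm{spec}}(\rho,\sigma)=q^{1-t}\,\|P_\rho\phi\|^{4t-2}.
\]
This equals $q^{1-t}$ only if $\phi\in\supp\rho$ or $t=\tfrac12$. Take two pure states with $|\langle\psi|\phi\rangle|=c\in(0,1)$. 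The formula gives $c^{2t}$, agreeing with your part (i), whereas (ii) asserts $c^{2-2t}$. So the two halves of the statement contradict each other there (the paper's proof claims they agree), and no choice of convention can prove both.

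What your argument does establish is the spectral formula in (ii) when $\phi\in\supp\rho$ (in particular when $\rho>0$) and $t\in(0,1]$. Even the endpoint $t=0$ depends on convention. The matrix $q^{-1/2}|\phi\rangle\langle\phi|$ is singular, and with $X^0=I$ (the convention behind $\mathsf F_0^{\mathrm{spec}}=\Tr\rho=1$ in Proposition~\ref{prop:endpoints}) one gets $1$ rather than $q$. Symmetrically, the support convention gives $p$ rather than $1$ at $t=1$ in (i). So that proposition cannot serve as a consistency check for singular states.
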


\begin{proof} {\color{blue} We interpret all inverse and fractional powers on the support of the corresponding
density operator. In particular, for a rank-one state $\rho=|\psi\rangle\langle\psi|$,
we have $\rho=\rho^{1/2}$ and the operators $\rho^{-1/2}$ and $\rho^{-1}$ coincide with
$\rho$ when restricted to $\operatorname{supp}\rho$.
} Moreover,
\[
\rho\,X\,\rho = \big(\langle{\psi}|X| {\psi}\rangle\big)\rho \qquad(\text{rank-one compression}).
\]

\emph{Uhlmann/root fidelity.}
\[
\mathsf{F}_{\mathrm U}(\rho,\sigma)=\Tr\sqrt{\rho^{1/2}\sigma\rho^{1/2}}
=\Tr\sqrt{\rho\sigma\rho}
=\Tr\sqrt{(\langle {\psi}|\sigma| {\psi}\rangle)\rho}
=\Tr\big(\sqrt{p}\,\rho\big)=\sqrt{p}.
\]

\emph{Matsumoto fidelity.}
Using Kubo--Ando’s formula,
\[
\rho\sharp\sigma
=\rho^{1/2}\!\left(\rho^{-1/2}\sigma\rho^{-1/2}\right)^{1/2}\!\rho^{1/2}
=\rho\left((\langle{\psi}|\sigma|{\psi}\rangle)\,\rho\right)^{1/2}\rho
=\sqrt{p}\,\rho,
\]
hence $\mathsf{F}_{\mathrm M}(\rho,\sigma)=\Tr(\rho\sharp\sigma)=\sqrt{p}$.

\emph{The weighted spectral  fidelity $\mathsf{F}_t^{\mathrm{spec}}$.}
By the same calculation but with $A=\rho^{-1}$ (which equals $\rho$ on $\operatorname{supp}\rho$),
\[
X:=\rho^{-1}\sharp\sigma
=\rho^{1/2}\!\left(\rho^{-1/2}\sigma\rho^{-1/2}\right)^{1/2}\!\rho^{1/2}
=\sqrt{p}\,\rho.
\]
Therefore $X^{2t}=(\sqrt{p}\,\rho)^{2t}=p^{\,t}\rho$ (since $\rho^k=\rho$ for all $k\ge1$), and
\[
\mathsf{F}_t^{\mathrm{spec}}(\rho,\sigma)
=\Tr\!\big(\rho\,X^{2t}\big)
=\Tr\!\big(\rho\,p^{\,t}\rho\big)
=p^{\,t}\Tr(\rho)=p^{\,t}.
\]

{\color{blue} \emph{Case (ii): $\sigma$ pure.}
Let $\sigma=|\phi\rangle\langle\phi|$ and $q:=\langle\phi|\rho|\phi\rangle$.
Then, by the same rank-one compression argument as above (now applied to $\sigma$),
we obtain
\[
\sigma^{-1}\sharp\rho = \sqrt{q}\,\sigma \quad \text{on } \operatorname{supp}\sigma,
\]
and hence
\[
(\sigma^{-1}\sharp\rho)^{2(1-t)} = q^{\,1-t}\sigma.
\]
Therefore,
\[
\mathsf{F}_t^{\mathrm{spec}}(\rho,\sigma)
=\Tr\!\big(\sigma(\sigma^{-1}\sharp\rho)^{2(1-t)}\big)
=q^{\,1-t}.
\]
The formulas for $\mathsf{F}_{\mathrm U}$ and $\mathsf{F}_{\mathrm M}$ follow analogously.

This direct computation also shows that the two formulas are consistent
when both $\rho$ and $\sigma$ are pure.
}
  
\end{proof}

\begin{corollary}[Bloch--sphere formulas for qubits]
Let $\rho=\tfrac12(I+\mathbf r\cdot\boldsymbol\sigma)$ and $\sigma=\tfrac12(I+\mathbf s\cdot\boldsymbol\sigma)$ be qubit states with $|\mathbf r|,|\mathbf s|\le1$.

\smallskip
\noindent\emph{(a)} If $\rho$ is pure $($$|\mathbf r|=1$$)$, then
\[
p=\Tr(\rho\sigma)
=\tfrac14\Tr\Big(I+\mathbf r\!\cdot\!\boldsymbol\sigma+\mathbf s\!\cdot\!\boldsymbol\sigma+(\mathbf r\!\cdot\!\boldsymbol\sigma)(\mathbf s\!\cdot\!\boldsymbol\sigma)\Big)
=\tfrac12\big(1+\mathbf r\!\cdot\!\mathbf s\big),
\]
and hence
\[
\mathsf{F}_{\mathrm U}(\rho,\sigma)=\sqrt{\tfrac{1+\mathbf r\cdot\mathbf s}{2}},
\qquad
\mathsf{F}_{\mathrm M}(\rho,\sigma)=\sqrt{\tfrac{1+\mathbf r\cdot\mathbf s}{2}},
\qquad
\mathsf{F}_t^{\mathrm{spec}}(\rho,\sigma)=\Big(\tfrac{1+\mathbf r\cdot\mathbf s}{2}\Big)^{\!t}.
\]

\noindent\emph{(b)} If both are pure with Bloch angle $\theta$ $($so $\mathbf r\cdot\mathbf s=\cos\theta$$)$, then
\[
\mathsf{F}_{\mathrm U}(\rho,\sigma)=\cos(\theta/2),
\qquad
\mathsf{F}_t^{\mathrm{spec}}(\rho,\sigma)=\cos^{2t}(\theta/2).
\]
\end{corollary}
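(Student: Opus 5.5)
The plan is to read the corollary off part (i) of the preceding proposition; the only real work is computing the overlap $p$ in Bloch coordinates.

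\emph{Step 1: compute $p$.} For part (a), $\rho=|\psi\rangle\langle\psi|$ is pure, so $p=\langle\psi|\sigma|\psi\rangle=\Tr(\rho\sigma)$. Expanding the product of the two Bloch decompositions gives
\[
\rho\sigma=\tfrac14\big(I+\mathbf r\cdot\boldsymbol\sigma+\mathbf s\cdot\boldsymbol\sigma+(\mathbf r\cdot\boldsymbol\sigma)(\mathbf s\cdot\boldsymbol\sigma)\big).
\]
Now use that the Pauli matrices are traceless, together with the identity
\[
(\mathbf r\cdot\boldsymbol\sigma)(\mathbf s\cdot\boldsymbol\sigma)=(\mathbf r\cdot\mathbf s)I+i(\mathbf r\times\mathbf s)\cdot\boldsymbol\sigma .
\]
Taking the trace leaves $\tfrac14(2+2\,\mathbf r\cdot\mathbf s)=\tfrac12(1+\mathbf r\cdot\mathbf s)$.

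\emph{Step 2: substitute.} Insert this value of $p$ into part (i) of the proposition, which gives $\sqrt p$ for $\mathsf F_{\mathrm U}$ and $\mathsf F_{\mathrm M}$ and $p^t$ for $\mathsf{F}_t^{\mathrm{spec}}$. This yields all three formulas in (a).

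\emph{Step 3: both states pure.} For part (b), both Bloch vectors are unit vectors, so $\mathbf r\cdot\mathbf s=\cos\theta$, where $\theta$ is the angle between them. The half-angle identity $\tfrac12(1+\cos\theta)=\cos^2(\theta/2)$ then gives $\mathsf F_{\mathrm U}=|\cos(\theta/2)|=\cos(\theta/2)$, since $\theta\in[0,\pi]$. In the same way $\mathsf{F}_t^{\mathrm{spec}}=(\cos^2(\theta/2))^t=\cos^{2t}(\theta/2)$.

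\emph{Consistency check.} Since $\sigma$ is also pure in (b), part (ii) of the proposition gives the alternative value $q^{1-t}$, with $q=\Tr(\rho\sigma)$. I would check that this does not contradict the answer from part (i): the two agree only if $p^t=p^{1-t}$. The proposition's closing remark is the thing to invoke here. For orthogonal or identical pure states $p\in\{0,1\}$, so the two expressions coincide, and I would simply adopt the part (i) convention as stated.

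\emph{Main obstacle.} The Pauli trace identity in Step 1 is routine. The genuine subtlety is this reconciliation in Step 3 between the formulas from parts (i) and (ii) when both states are pure: they conflict in general, so (b) has to be read as following from part (i) in particular, and I would not try to derive it from both.
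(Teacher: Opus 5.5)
Your Steps 1--3 are exactly the paper's proof. The Pauli product identity gives $p=\tfrac12(1+\mathbf r\cdot\mathbf s)$, this value is substituted into part (i) of the preceding proposition, and the half-angle identity gives (b). So the derivation is correct as a consequence of part (i).

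Your consistency check is right in substance, but you should not invoke the proposition's closing remark, because that remark is false. For two pure states with $p\in(0,1)$, the values $p^t$ from (i) and $p^{1-t}$ from (ii) agree only at $t=\tfrac12$, a case you also leave out of your list.

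The conflict has a concrete source. Fix $0<t<1$.
\begin{itemize}
\item On pairs with $\rho=|\psi\rangle\langle\psi|$ pure and $\sigma$ invertible, the value is $\langle\psi|\sigma|\psi\rangle^{t}$. This is also the limit from invertible $\rho$, via flip symmetry.
\item On pairs with $\rho$ invertible and $\sigma=|\phi\rangle\langle\phi|$ pure, a direct computation gives $\langle\phi|\rho|\phi\rangle^{1-t}$.
\end{itemize}
At a pure pair these tend to $p^t$ and $p^{1-t}$ respectively. Hence $\mathsf{F}_t^{\mathrm{spec}}$ has no continuous extension to pairs of pure states when $t\neq\tfrac12$.

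Consequently, (b), and likewise (a) whenever $|\mathbf s|=1$, is valid only as a statement about the support-inverse convention of part (i). That is exactly the route the paper takes. Note also that this convention gives $\mathsf{F}_1^{\mathrm{spec}}(\rho,\sigma)=p$ for pure $\rho$, not the value $1$ asserted in the endpoint proposition. So the formulas in (a) are really statements about that convention rather than about a single well-defined quantity.
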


\begin{proof}
Use the Pauli identities
\(
\sigma_i\sigma_j=\delta_{ij}I+i\sum_k\varepsilon_{ijk}\sigma_k
\)
to obtain
\[
(\mathbf r\!\cdot\!\boldsymbol\sigma)(\mathbf s\!\cdot\!\boldsymbol\sigma)
=(\mathbf r\!\cdot\!\mathbf s)\,I+i(\mathbf r\times\mathbf s)\!\cdot\!\boldsymbol\sigma.
\]
Since $\Tr(\sigma_i)=0$, $\Tr(\sigma_i\sigma_j)=2\delta_{ij}$, the imaginary part is traceless, and $\Tr I=2$, we get
\[
p=\Tr(\rho\sigma)=\tfrac14\big(2+2\,\mathbf r\!\cdot\!\mathbf s\big)=\tfrac12(1+\mathbf r\!\cdot\!\mathbf s).
\]
Part (a) then follows by substituting this $p$ into the formulas in the proposition with $\rho$ pure. For part (b), if $|\mathbf r|=|\mathbf s|=1$ and $\mathbf r\cdot\mathbf s=\cos\theta$, then
\[
p=\tfrac12(1+\cos\theta)=\cos^2(\theta/2),
\]
so $\mathsf{F}_{\mathrm U}(\rho,\sigma)=\sqrt{p}=\cos(\theta/2)$ and $\mathsf{F}_t^{\mathrm{spec}}(\rho,\sigma)=p^{\,t}=\cos^{2t}(\theta/2)$.
\end{proof}

\section{Separate Concavity, Log-Convexity, and Failure of Data Processing}\label{sec:concavity-dpi}
\subsection{Separate Concavity, Log-Convexity}
 \begin{theorem}[Concavity in each variable]
\label{thm:separate-concavity}
Fix $t\in[0,1]$. For density operators, the map
\[
(\rho,\sigma)\longmapsto \mathsf{F}_t^{\mathrm{spec}}(\rho,\sigma)
\]
is concave in $\rho$ when $\sigma$ is fixed, and concave in $\sigma$ when $\rho$ is fixed. 
\end{theorem}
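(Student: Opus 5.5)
By the flip symmetry $\mathsf{F}_t^{\mathrm{spec}}(\rho,\sigma)=\mathsf{F}_{1-t}^{\mathrm{spec}}(\sigma,\rho)$, the four cases (concavity in $\rho$ or in $\sigma$; $t\le\tfrac12$ or $t\ge\tfrac12$) collapse to two. Concavity in $\sigma$ at weight $t$ is the same statement as concavity in the second-listed state at weight $1-t$ after the flip. So I will prove: \textbf{(A)} for fixed $\rho$ and $0\le t\le\tfrac12$, the map $\sigma\mapsto\mathsf{F}_t^{\mathrm{spec}}(\rho,\sigma)$ is concave; and \textbf{(B)} for fixed $\sigma$ and $0\le t\le\tfrac12$, the map $\rho\mapsto\mathsf{F}_t^{\mathrm{spec}}(\rho,\sigma)$ is concave. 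Then (A) together with the flip gives concavity in $\rho$ for $t\ge\tfrac12$, and (B) with the flip gives concavity in $\sigma$ for $t\ge\tfrac12$. Throughout I work with $\rho,\sigma>0$ and pass to general states by the $\varepsilon$-regularization used in the proof of Theorem~\ref{thm:variational}, since pointwise limits of concave functions are concave.

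For (A) I use Theorem~\ref{thm:variational}: $\mathsf{F}_t^{\mathrm{spec}}(\rho,\sigma)=\max_{T\in\mathcal K(\rho,\sigma)}\Tr(\rho T^{2t})$.
\begin{itemize}
\item If $T_i\in\mathcal K(\rho,\sigma_i)$, then $\lambda T_1+(1-\lambda)T_2\in\mathcal K(\rho,\lambda\sigma_1+(1-\lambda)\sigma_2)$, because the block LMI is affine in $(T,\sigma)$.
\item For $2t\le1$, the map $T\mapsto\Tr(\rho T^{2t})$ is concave, since $x^{2t}$ is operator concave.
\item Taking the maximizers $T_i$ for $\sigma_i$ and testing their convex combination in the mixed problem gives concavity in $\sigma$.
\end{itemize}
Equivalently, $\sigma\mapsto\rho^{-1}\sharp\sigma$ is operator concave (Kubo--Ando) and $x^{2t}$ is operator monotone and operator concave, so the composition is operator concave; pairing with $\rho\ge0$ under the trace preserves concavity.

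Step (B) is the main obstacle, because $\rho$ enters both as the weight and inside $\rho^{-1}$. The variational form no longer helps: the set $\mathcal K(\rho,\sigma)$ depends on $\rho^{-1}$, which is operator convex rather than concave. I would first look for an \emph{infimum of affine functionals in $\rho$}, guided by the commutative case. There, weighted AM--GM gives
\[
p^{1-t}q^{t}=\inf_{y>0}\Big[(1-t)\,p\,y^{-t}+t\,q\,y^{1-t}\Big],
\]
and at $t=\tfrac12$ this becomes Alberti's formula $F=\tfrac12\inf_{Y>0}[\Tr\rho Y^{-1}+\Tr\sigma Y]$. I would therefore try to prove
\[
\mathsf{F}_t^{\mathrm{spec}}(\rho,\sigma)=\inf_{Y>0}\Big[(1-t)\Tr(\rho\,Y^{-2t})+t\,\Tr(\sigma\,Y^{2-2t})\Big],
\]
or a variant in which the $\sigma$-term is allowed to depend on $Y$ in any way, so long as the $\rho$-term is linear in $\rho$. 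The candidate minimizer is $Y=(\rho^{-1}\sharp\sigma)^{-1}=\sigma^{-1}\sharp\rho$. At this point the Riccati identity $X\rho X=\sigma$ reproduces the value $\Tr(\rho X^{2t})$. The remaining task is to show that this stationary point is a global minimum, for instance by an operator AM--GM or Young-type inequality applied to $Y^{-2t}$ and $Y^{2-2t}$. Concavity in $\rho$ then follows immediately, since an infimum of affine functions of $\rho$ is concave.

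If the exact variational identity fails in the noncommutative case, my fallback is the route the paper itself signals: an operator-perspective argument. I would rewrite $\rho\natural_t\sigma=X^t\rho X^t$ via $X=\rho^{-1/2}M^{1/2}\rho^{-1/2}$ with $M=\rho^{1/2}\sigma\rho^{1/2}$, and look for a concave perspective $\rho^{1/2}g(\rho^{-1/2}\,\cdot\,\rho^{-1/2})\rho^{1/2}$ in $\rho$. Failing that, I would combine the two concave anchors $t=0$ (the constant $1$) and $t=\tfrac12$ (Uhlmann, jointly concave) using a Hadamard three-lines interpolation in $t$, and check whether concavity survives at intermediate weights.
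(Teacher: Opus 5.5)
\textbf{There is a genuine gap in step (B), and it cannot be closed: (B) is false, so the statement as written does not hold.}

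\textbf{Step (A) is correct.} It is exactly the part of the statement that is true. For fixed $\rho$ and $2t\le 1$, the map $\sigma\mapsto\rho^{-1}\sharp\sigma$ is operator concave, and $x\mapsto x^{2t}$ is operator monotone and operator concave. Hence $\sigma\mapsto\Tr(\rho(\rho^{-1}\sharp\sigma)^{2t})$ is concave, and the flip then gives concavity in $\rho$ for $t\ge\tfrac12$.

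\textbf{Counterexample to (B).} Take
\[
t=\tfrac14,\qquad
\sigma=\begin{pmatrix}q_1&0\\0&q_2\end{pmatrix},\quad q_1=\tfrac{10^4}{10001},\ q_2=\tfrac{1}{10001},\qquad
\rho_\pm=\begin{pmatrix}1/2&\pm 3/10\\ \pm 3/10&1/2\end{pmatrix},\qquad
\tfrac12(\rho_++\rho_-)=\tfrac12 I .
\]
Since $\tfrac12 I$ commutes with $\sigma$, $\mathsf{F}_{1/4}^{\mathrm{spec}}(\tfrac12 I,\sigma)=2^{-3/4}(q_1^{1/4}+q_2^{1/4})\approx 0.6540$. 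On the other hand, solving $X\rho_\pm X=\sigma$ gives $X=\rho_\pm^{-1}\sharp\sigma\approx\begin{pmatrix}1.4204&\mp 0.0105\\ \mp 0.0105&0.0177\end{pmatrix}$. Then $\mathsf{F}_{1/4}^{\mathrm{spec}}(\rho_\pm,\sigma)=\Tr(\rho_\pm X^{1/2})\approx 0.6575$ for both signs. The two values must agree, since conjugation by $\mathrm{diag}(1,-1)$ swaps $\rho_\pm$ and fixes $\sigma$. As a check, $\Tr(\rho_\pm X)\approx 0.7127$, which is the Uhlmann fidelity. So $\mathsf{F}_{1/4}^{\mathrm{spec}}(\cdot,\sigma)$ violates midpoint concavity, and by the flip so does $\mathsf{F}_{3/4}^{\mathrm{spec}}(\sigma,\cdot)$.

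This is not special to $t=\tfrac14$. Expand $\epsilon\mapsto\mathsf{F}_t^{\mathrm{spec}}\bigl(\tfrac12 I+\epsilon\bigl(\begin{smallmatrix}0&1\\1&0\end{smallmatrix}\bigr),\mathrm{diag}(q_1,q_2)\bigr)$ to second order at $\epsilon=0$. The curvature has a positive leading term of order $q_2^{\,t}$ against negative terms of order $q_2^{1/2}$. So it is positive for every $t\in(0,\tfrac12)$ once $q_2$ is small enough.

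\textbf{Why your proposed routes cannot work.} Your candidate identity $\mathsf{F}_t^{\mathrm{spec}}(\rho,\sigma)=\inf_{Y>0}\bigl[(1-t)\Tr(\rho Y^{-2t})+t\Tr(\sigma Y^{2-2t})\bigr]$ must fail for $t\in(0,\tfrac12)$. Its right-hand side is an infimum of jointly affine functions of $(\rho,\sigma)$, so it would even give joint concavity. The point $Y_\star=(\rho^{-1}\sharp\sigma)^{-1}$ does reproduce the value $\mathsf{F}_t^{\mathrm{spec}}$ through $X\rho X=\sigma$. However, for noncommuting pairs and $t\neq\tfrac12$ it generally fails the off-diagonal first-order condition, so the infimum is only a lower bound.

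Your fallbacks do not rescue (B) either:
\begin{itemize}
\item Three-lines interpolation in $t$ yields upper bounds such as $\mathsf{F}_t^{\mathrm{spec}}\le(\mathsf{F}_0^{\mathrm{spec}})^{1-\theta}(\mathsf{F}_{1/2}^{\mathrm{spec}})^{\theta}$. These carry no information about concavity in the state.
\item The perspective route is precisely where the paper's own proof breaks. It invokes concavity of $A\mapsto A^{1/2}f(A^{-1/2}BA^{-1/2})A^{1/2}$ for fixed $B$. But the operator needed to reproduce $\rho^{1/2}X^{2t}\rho^{1/2}$ is $B=M^{1/2}\rho^{-1}M^{1/2}$ with $M=\rho^{1/2}\sigma\rho^{1/2}$, and this depends on $\rho$.
\end{itemize}

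\textbf{What can actually be proved} is what your (A) delivers, together with the trivial endpoints:
\begin{itemize}
\item concavity in $\sigma$ for $t\in[0,\tfrac12]\cup\{1\}$;
\item concavity in $\rho$ for $t\in\{0\}\cup[\tfrac12,1]$.
\end{itemize}
For $t\in(0,\tfrac12)$ concavity in $\rho$ fails, and for $t\in(\tfrac12,1)$ concavity in $\sigma$ fails.
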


\begin{proof}
For $t=\tfrac12$, $\mathsf{F}_{1/2}^{\mathrm{spec}}(\rho,\sigma)=\Tr\sqrt{\rho^{1/2}\sigma\rho^{1/2}}$
and Lieb's concavity theorem yields joint concavity.

For general $t\in[0,1]$, we show concavity in $\rho$ for fixed $\sigma$; the other variable is symmetric by the flip symmetry
$\mathsf{F}_t^{\mathrm{spec}}(\rho,\sigma)=\mathsf{F}_{1-t}^{\mathrm{spec}}(\sigma,\rho)$.
Write $X_\rho:=\rho^{-1}\!\sharp\sigma$.
For $0\le 2t\le 1$, the map $Y\mapsto Y^{2t}$ is operator concave and operator monotone.
By the operator-perspective framework (Hansen--Pedersen/Effros), for fixed positive $B$ the map
\[
A\ \longmapsto\ A^{1/2} f\!\big(A^{-1/2} B A^{-1/2}\big) A^{1/2}
\]
is concave whenever $f$ is operator concave and operator monotone increasing.
Set $f(x)=x^{t}$ and $B=(\rho^{1/2}\sigma\rho^{1/2})^{1/2}\!\rho^{-1}\!(\rho^{1/2}\sigma\rho^{1/2})^{1/2}$ evaluated at the given $\sigma$. This yields precisely
$\rho\mapsto \rho^{1/2}(\rho^{-1/2}(\rho^{1/2}\sigma\rho^{1/2})^{1/2}\rho^{-1/2})^{2t}\rho^{1/2}$,
whose trace is $\mathsf{F}_t^{\mathrm{spec}}(\rho,\sigma)$. Hence concavity in $\rho$. The concavity in $\sigma$ follows from the flip symmetry.
\end{proof}

{\color{blue} 
\begin{theorem}[Log-convexity in the parameter $t$]
\label{lem:logconvex}
Fix density operators $\rho,\sigma$. Then, for all real $s<u$ and $\theta\in[0,1]$,
\[
\mathsf{F}_{(1-\theta)s+\theta u}^{\mathrm{spec}}(\rho,\sigma)
\ \le\ \big(\mathsf{F}_s^{\mathrm{spec}}(\rho,\sigma)\big)^{1-\theta}\,
\big(\mathsf{F}_u^{\mathrm{spec}}(\rho,\sigma)\big)^{\theta}.
\]
Equivalently, the function $t\mapsto \mathsf{F}_t^{\mathrm{spec}}(\rho,\sigma)$ is log-convex on $\mathbb{R}$
(in particular on $[0,1]$).
\end{theorem}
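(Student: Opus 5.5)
The plan is to write the fidelity as a moment generating function of a spectral measure. Set $X:=\rho^{-1}\sharp\sigma$ and assume first $\rho,\sigma>0$, so that $X>0$. Take its spectral decomposition $X=\sum_j \lambda_j P_j$ with $\lambda_j>0$ and orthogonal projections $P_j$. Then for every real $t$,
\[
\mathsf{F}_t^{\mathrm{spec}}(\rho,\sigma)=\Tr\big(\rho X^{2t}\big)=\sum_j w_j\, e^{2t\log\lambda_j},\qquad w_j:=\Tr(\rho P_j)\ge 0 .
\]
This defines the function on all of $\mathbb{R}$, not just $[0,1]$, and shows that $t\mapsto \mathsf{F}_t^{\mathrm{spec}}$ is a nonnegative combination of exponentials $e^{c_j t}$ with $c_j=2\log\lambda_j$.

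The inequality then follows from Hölder's inequality on the finite measure $\{w_j\}$. Let $t=(1-\theta)s+\theta u$ and split $e^{c_j t}=(e^{c_j s})^{1-\theta}(e^{c_j u})^{\theta}$. Hölder with exponents $p=1/(1-\theta)$ and $q=1/\theta$ gives
\[
\sum_j w_j e^{c_j t}\le\Big(\sum_j w_j e^{c_j s}\Big)^{1-\theta}\Big(\sum_j w_j e^{c_j u}\Big)^{\theta},
\]
which is exactly the claim. The endpoint cases $\theta\in\{0,1\}$ are trivial.

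An equivalent route, in the spirit of Proposition~\ref{prop:convex}, uses $Y=\log X$. One checks
\[
\phi\phi''-(\phi')^2=4\big[\Tr(\rho Y^2e^{2tY})\Tr(\rho e^{2tY})-(\Tr(\rho Ye^{2tY}))^2\big]\ge 0,
\]
by Cauchy--Schwarz for the positive functional $Z\mapsto\Tr(\rho e^{tY}Ze^{tY})$, using that $Y$ commutes with $e^{tY}$. Alternatively one can invoke Hadamard's three-lines theorem on $z\mapsto \Tr(\rho X^{2z})$. I will present the Hölder argument as the main proof, because it needs no differentiability.

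The main obstacle is the singular case, where $\rho$ or $\sigma$ is not invertible and $X$ may have a kernel. Then negative powers $X^{2t}$ with $t<0$ are not defined, and the statement on all of $\mathbb{R}$ needs an interpretation. I will interpret powers on $\supp X$, as the paper does elsewhere, so that $X^{2t}:=\sum_{\lambda_j>0}\lambda_j^{2t}P_j$ and the zero eigenvalue simply drops out. The sum then still has the form $\sum_j w_j e^{c_j t}$ over the positive eigenvalues, and Hölder applies verbatim. This interpretation agrees with the formula for $t>0$, so log-convexity on $(0,1]$ holds with the original definition. At $t=0$ it can disagree with the definition's value $1$; this is harmless for the stated inequality, since a jump upward at an endpoint preserves log-convexity there.

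Alternatively one can take the regularizations $\rho_\varepsilon,\sigma_\varepsilon$ used in the proof of Theorem~\ref{thm:variational}. These satisfy the inequality exactly, and the inequality passes to the limit as $\varepsilon\downarrow0$ for each fixed $s,u,\theta$ by continuity of $\mathsf{F}_t^{\mathrm{spec}}$ wherever the limit exists.
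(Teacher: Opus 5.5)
Your proof is correct and takes a different route from the paper.

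\textbf{Comparison of routes.} The paper writes $\mathsf{F}_t^{\mathrm{spec}}(\rho,\sigma)=\|X^t\rho^{1/2}\|_2^2$ and treats $z\mapsto X^z\rho^{1/2}$ as a Hilbert--Schmidt-valued analytic map on the strip $s\le\Re z\le u$. It notes that $X^{iy}$ is unitary, so the norm is constant on each boundary line, and then invokes Hadamard's three-lines theorem. You instead expand $\Tr(\rho X^{2t})=\sum_j w_j e^{2t\log\lambda_j}$ with $w_j=\Tr(\rho P_j)\ge 0$, i.e.\ a moment generating function of a positive measure, and conclude by H\"older.

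Your version is more elementary: it needs no analyticity or boundedness on a strip, and it makes the extension to all $t\in\mathbb{R}$ transparent. The three-lines argument is heavier here. Its advantage is that it would apply verbatim to $\|X^tZ\|$ in any unitarily invariant norm, where no exponential-sum representation is available.

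\textbf{The singular case.} You also treat the singular case, which the paper skips by tacitly assuming $X$ is positive definite. Separating the two conventions there is essential, not cosmetic. Take $\sigma=|\phi\rangle\langle\phi|$, $\rho>0$, and $q=\langle\phi|\rho|\phi\rangle\in(0,1)$. Then $X=q^{-1/2}\sigma$ and the support-interpreted function is $q^{1-t}$. Keeping the definition's value $\mathsf{F}_0^{\mathrm{spec}}=1$ on $\mathbb{R}$ would break log-convexity at the interior point $0$ of $[-1,1]$, because $1>q=\sqrt{q^{2}\cdot 1}$. So the upward jump at $t=0$ is harmless only when $0$ is an endpoint of the domain, which is how you phrased it.

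\textbf{Caveat on the regularization alternative.} Your closing regularization argument does not recover the support-interpreted function. The reason is that $\mathsf{F}_t^{\mathrm{spec}}$ is not continuous at singular states for every $t$. For example, if $\rho=|\psi\rangle\langle\psi|$, then $\mathsf{F}_1^{\mathrm{spec}}(\rho_\varepsilon,\sigma_\varepsilon)=1$ for every $\varepsilon$ by Proposition~\ref{prop:endpoints}. The support-interpreted value, however, is $\langle\psi|\sigma|\psi\rangle$. That route therefore only proves log-convexity of the limit function, so keep the support-based H\"older argument as the proof.
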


\begin{proof} Set $X:=\rho^{-1}\sharpop\sigma\ge 0$ and $Z:=\rho^{1/2}$. Note that
\[
\mathsf{F}_t^{\mathrm{spec}}(\rho,\sigma)=\Tr\!\big(Z\,X^{2t}\,Z\big)
=\Tr\!\big((X^t Z)^*(X^t Z)\big)=\|X^t Z\|_2^2.
\]
Note that
\[
\mathsf{F}_t^{\mathrm{spec}}(\rho,\sigma)=\Tr\!\big(Z\,X^{2t}\,Z\big)
=\Tr\!\big((X^t Z)^*(X^t Z)\big)=\|X^t Z\|_2^2,
\]
where $\|\cdot\|_2$ is the Hilbert--Schmidt norm. Consider the strip
\(
S=\{z\in\mathbb{C}: s\le \Re z\le u\}
\)
and the vector-valued analytic map
\(
\Phi(z):=X^{\,z}\,Z
\)
with values in the Hilbert--Schmidt space. (Analyticity follows from functional calculus for the positive definite $X$.)
 
For $z=s+iy$,
\[
\|\Phi(s+iy)\|_2
=\|X^{s+iy}Z\|_2
=\|X^{iy}X^{s}Z\|_2
=\|X^{s}Z\|_2,
\]
since $X^{iy}$ is unitary and left multiplication by a unitary preserves the Hilbert--Schmidt norm.
Similarly, for $z=u+iy$ we have $\|\Phi(u+iy)\|_2=\|X^{u}Z\|_2$.
By Hadamard's three-lines theorem \cite{SteinShakarchi03} (applied to the subharmonic function $z\mapsto\log\|\Phi(z)\|_2$),
for $t=(1-\theta)s+\theta u$ we obtain
\[
\|X^{t}Z\|_2
\ \le\ \|X^{s}Z\|_2^{\,1-\theta}\,\|X^{u}Z\|_2^{\,\theta}.
\]
Squaring both sides yields
\[
\mathsf{F}_t^{\mathrm{spec}}(\rho,\sigma)
=\|X^{t}Z\|_2^{2}
\ \le\ \big(\|X^{s}Z\|_2^{2}\big)^{1-\theta}\big(\|X^{u}Z\|_2^{2}\big)^{\theta}
=\big(\mathsf{F}_s^{\mathrm{spec}}(\rho,\sigma)\big)^{1-\theta}\big(\mathsf{F}_u^{\mathrm{spec}}(\rho,\sigma)\big)^{\theta}.
\]
This is precisely the claimed log-convexity.
\end{proof}
}

\begin{remark}
By the flip symmetry $\mathsf{F}_t^{\mathrm{spec}}(\rho,\sigma)=\mathsf{F}_{1-t}^{\mathrm{spec}}(\sigma,\rho)$, log-convexity on $[0,1]$
around any pair $(\rho,\sigma)$ immediately transfers to the flipped pair $(\sigma,\rho)$.
Moreover, the proof shows that $t\mapsto \|X^{t}Z\|_2$ is log-convex, so any positive power (e.g.\ the square giving $\mathsf{F}_t^{\mathrm{spec}}$) remains log-convex.
\end{remark}


\subsection{Data processing inequality fails for $\mathsf{F}_t^{\mathrm{spec}}$ outside the midpoint}

In this section we show that the data processing inequality (DPI)
\[
\mathsf{F}_t^{\mathrm{spec}}(\Phi(\rho),\Phi(\sigma)) \;\ge\; \mathsf{F}_t^{\mathrm{spec}}(\rho,\sigma)
\]
fails for generic $t\in(0,1)\setminus\{\tfrac12\}$ and some CPTP maps $\Phi$.

\begin{theorem}[DPI fails away from the midpoint]\label{thm:perturb}
Fix an orthonormal basis $B$ and a parameter $t\in(0,1)\setminus\{\tfrac12\}$. 
There exist qubit states $\rho,\sigma$ such that for the pinching (dephasing) channel $\Pi_B$,
\[
\mathsf{F}_t^{\mathrm{spec}}\!\big(\Pi_B(\rho),\Pi_B(\sigma)\big)\;<\;\mathsf{F}_t^{\mathrm{spec}}(\rho,\sigma).
\]
Moreover, one can choose $\rho,\sigma$ arbitrarily close to a commuting pair (i.e.\ with arbitrarily small off–diagonal coherences in the basis $B$).
\end{theorem}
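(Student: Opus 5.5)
The plan is a second-order perturbation argument around a commuting diagonal pair. Pinching by $\Pi_B$ leaves the diagonal part unchanged and kills the coherences, so the pinched value is the classical expression $\sum_i r_i^{1-t}s_i^{t}$. The statement then reduces to showing that the second-order correction coming from the coherences can have the wrong sign. By the flip symmetry $\mathsf{F}_t^{\mathrm{spec}}(\rho,\sigma)=\mathsf{F}_{1-t}^{\mathrm{spec}}(\sigma,\rho)$, which is compatible with pinching both arguments, it suffices to treat $t\in(0,\tfrac12)$.

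\textbf{Setup.} Work in the basis $B$ and take
\[
\rho_\varepsilon=\begin{bmatrix}a&\varepsilon\alpha\\ \varepsilon\bar\alpha&1-a\end{bmatrix},\qquad
\sigma_\varepsilon=\begin{bmatrix}b&\varepsilon\beta\\ \varepsilon\bar\beta&1-b\end{bmatrix},
\]
with $a,b\in(0,1)$ and $a\neq b$. Then $\Pi_B(\rho_\varepsilon)=\rho_0$ and $\Pi_B(\sigma_\varepsilon)=\sigma_0$, so
\[
\mathsf{F}_t^{\mathrm{spec}}(\Pi_B\rho_\varepsilon,\Pi_B\sigma_\varepsilon)=a^{1-t}b^{t}+(1-a)^{1-t}(1-b)^{t},
\]
which is independent of $\varepsilon$.

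\textbf{Expansion of $X_\varepsilon$.} Write $X_\varepsilon=\rho_\varepsilon^{-1}\sharp\sigma_\varepsilon$. I would not use the closed form; instead I would use the Riccati characterization $X_\varepsilon\rho_\varepsilon X_\varepsilon=\sigma_\varepsilon$ from the flip-symmetry proof and solve it order by order.
\begin{itemize}
\item At order $0$: $X_0=\mathrm{diag}(\sqrt{b/a},\sqrt{(1-b)/(1-a)})$.
\item At order $1$: the off-diagonal correction $X^{(1)}$ solves a Sylvester equation with diagonal coefficients, so each entry is explicit in terms of $\alpha,\beta$.
\item At order $2$: only the diagonal part $X^{(2)}_{\mathrm{diag}}$ is needed.
\end{itemize}
Then apply the Daleckii--Krein formula to $X_\varepsilon^{2t}$. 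The first-order terms vanish after taking the trace against the diagonal $\rho_0$, so
\[
\mathsf{F}_t^{\mathrm{spec}}(\rho_\varepsilon,\sigma_\varepsilon)-\mathsf{F}_t^{\mathrm{spec}}(\rho_0,\sigma_0)=\varepsilon^2\,Q_t(\alpha,\beta)+O(\varepsilon^3).
\]
Here $Q_t$ is a Hermitian quadratic form in $(\alpha,\beta)$. Its contributions are the divided difference $[x^{2t}]'$ at the two eigenvalues of $X_0$, the cross term $\Tr(\rho^{(1)}(X^{2t})^{(1)})$, and the second-order diagonal terms.

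\textbf{Conclusion from $Q_t$.} By scaling, if $Q_t(\alpha,\beta)>0$ for some choice, then the strict inequality of the statement holds for all small $\varepsilon>0$. This also gives the ``arbitrarily close to a commuting pair'' clause. As sanity checks, at $t=\tfrac12$ the form $Q_{1/2}$ must be negative semidefinite, since the Uhlmann fidelity satisfies DPI. At $t=0$ and $t=1$ the form must vanish identically, since $\mathsf{F}=1$ there.

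\textbf{Main obstacle.} The hard step is evaluating $Q_t$ and showing it is not negative semidefinite for every $t\neq\tfrac12$, rather than only for $t$ near $0$ or $1$. I would first try the one-parameter sub-family $\beta=\lambda\alpha$ with $\lambda$ real. This gives $Q_t=|\alpha|^2 q_t(\lambda)$ with $q_t$ a real quadratic in $\lambda$, and the goal is to exhibit $(a,b,\lambda)$ with $q_t(\lambda)>0$. A natural guess is $\lambda$ near the value making $X^{(1)}=0$; there the correction comes only from $X^{(2)}_{\mathrm{diag}}$, and its sign changes at $t=\tfrac12$ because the factor $2t-1$ appears. If the algebra becomes too heavy, the fallback is an explicit exact qubit example for each $t$. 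Such examples can be found using the closed form $\rho^{-1}\sharp\sigma=(\rho^{-1}+\sqrt{\det\rho^{-1}\sigma}\,\sigma)/\sqrt{\det(\rho^{-1}+\sqrt{\det\rho^{-1}\sigma}\,\sigma)}$ for $2\times2$ matrices. That route covers only isolated $t$, so the perturbative formula remains the main line for generic $t$.
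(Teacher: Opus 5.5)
Your strategy (perturb around a full-rank diagonal pair and show that the second-order coefficient of $\mathsf{F}_t^{\mathrm{spec}}(\rho_\varepsilon,\sigma_\varepsilon)-\mathsf{F}_t^{\mathrm{spec}}(\Pi_B\rho_\varepsilon,\Pi_B\sigma_\varepsilon)$ can be positive) is viable. As written, though, the proposal has a genuine gap. The one step that proves the theorem, namely that $Q_t$ is not negative semidefinite for $t\neq\frac12$, is left undone, and the mechanism you suggest for it is wrong. Take the line $\beta=\lambda_0\alpha$ with $\lambda_0=x_1x_2=\sqrt{b(1-b)/(a(1-a))}$. On it one has $X_0\rho_\varepsilon X_0=\sigma_\varepsilon$ exactly, so by uniqueness of the positive Riccati solution $X_\varepsilon\equiv X_0$ for every $\varepsilon$. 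There is no $X^{(2)}_{\mathrm{diag}}$ at all, and $\mathsf{F}_t^{\mathrm{spec}}(\rho_\varepsilon,\sigma_\varepsilon)=a^{1-t}b^t+(1-a)^{1-t}(1-b)^t$ identically. So $q_t(\lambda_0)=0$, and what must be shown is $q_t'(\lambda_0)\neq0$; nothing in the proposal addresses this.

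The missing computation is short if you parametrize by $(\rho,X)$ instead of $(\rho,\sigma)$. Take $X=X_0+\varepsilon Y$ with $X_0=\mathrm{diag}(x_1,x_2)$, $x_1\neq x_2$, and $Y$ off-diagonal with $Y_{12}=\gamma$. Take $\rho=\mathrm{diag}(r_1,r_2)+\varepsilon R$ with $R_{12}=\alpha$, and \emph{define} $\sigma:=X\rho X$. Normalize afterwards; both sides are homogeneous of degree $1$ in $\rho$ and $2t$ in $X$. Then $\rho^{-1}\sharp\sigma=X$ exactly and $\mathsf{F}_t^{\mathrm{spec}}(\rho,\sigma)=\Tr(\rho X^{2t})$. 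Moreover $(X\rho X)_{11}=x_1^2r_1+\varepsilon^2(x_1u+r_2g)$ and $(X\rho X)_{22}=x_2^2r_2+\varepsilon^2(x_2u+r_1g)$, where $u=2\Re(\bar\alpha\gamma)$ and $g=|\gamma|^2$. Daleckii--Krein for $f(x)=x^{2t}$ gives the difference as $\varepsilon^2(\mathcal A u+\mathcal B g)+O(\varepsilon^3)$, with
\[
\mathcal A=f^{[1]}(x_1,x_2)-\tfrac12\big(f'(x_1)+f'(x_2)\big)=-\tfrac{(x_1-x_2)^2}{12}\,f'''(\xi),\qquad f'''(x)=2t(2t-1)(2t-2)\,x^{2t-3}.
\]
The second expression is the trapezoid-rule error, so $\mathcal A\neq0$ for every $t\in(0,1)\setminus\{\frac12\}$. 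Now choose $\alpha=s\gamma$ with $s\in\mathbb R$, $s\mathcal A>0$ and $|s|$ large. This makes the coefficient positive, which proves the theorem on both halves of $(0,1)$ without the flip, and gives the small-coherence clause. In your variables $q_t'(\lambda_0)$ is a positive multiple of $\mathcal A$, so your instinct to look \emph{near} $\lambda_0$ was right. Separately, your fallback $2\times2$ formula is off. The correct one is $A\sharp B=(\det A\det B)^{1/4}(A+kB)/\sqrt{\det(A+kB)}$ with $k=\sqrt{\det A/\det B}$.

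For comparison, the paper avoids perturbation theory. It takes $\rho=|+\rangle\langle+|$ and $\sigma=|\psi_p\rangle\langle\psi_p|$, and compares the pure-state closed form $\langle+|\sigma|+\rangle^{t}\to2^{-t}$ with the classical pinched value $2^{t-1}(p^t+(1-p)^t)\to2^{t-1}$ as $p\downarrow0$. It then flips for $t>\frac12$. This is much shorter, but it has two weaknesses:
\begin{enumerate}
\item It uses two pure states, where the value is convention-dependent; the paper's own formula for pure $\sigma$ would give exponent $1-t$. So $\sigma$ must first be perturbed to full rank, where $p^t$ is the genuine limit.
\item $\rho=|+\rangle\langle+|$ has off-diagonal entry $\frac12$, so the example does not realize the ``arbitrarily small coherences'' clause.
\end{enumerate}
A completed version of your route stays with full-rank states and delivers that clause directly.
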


\begin{proof}
Without loss of generality take $B$ to be the computational basis $\{|0\rangle,|1\rangle\}$.
Recall the ``classicalization'' identity for diagonal inputs (a direct computation when $\rho'$ and $\sigma'$ commute):
\begin{equation}\label{eq:classical}
\mathsf{F}_t^{\mathrm{spec}}(\rho',\sigma')=\sum_{i=0}^1(\rho'_{ii})^{1-t}(\sigma'_{ii})^{t}\qquad (0\le t\le1).
\end{equation}

For $t\in(0,\tfrac12)$, we choose the \emph{pure} state 
\[
\rho \;=\; |+\rangle\!\langle+|\;=\;\frac12\!\begin{bmatrix}1&1\\[2pt]1&1\end{bmatrix},
\qquad |+\rangle=\frac{|0\rangle+|1\rangle}{\sqrt2},
\]
and for a parameter $p\in(0,1)$ set
\[
\sigma \;=\;|\psi_p\rangle\!\langle\psi_p|,
\qquad |\psi_p\rangle=\sqrt{p}\,|0\rangle+\sqrt{1-p}\,|1\rangle.
\]
In the basis $B$, 
\[
\sigma=\begin{bmatrix}p&\sqrt{p(1-p)}\\[2pt]\sqrt{p(1-p)}&1-p\end{bmatrix}.
\]
Pinching in $B$ removes off–diagonals:
\[
\Pi_B(\rho)=\frac12\begin{bmatrix}1&0\\[2pt]0&1\end{bmatrix},\qquad 
\Pi_B(\sigma)=\begin{bmatrix}p&0\\[2pt]0&1-p\end{bmatrix}.
\]
Because $\rho$ is pure, the closed form from Section~\ref{sec:qubit-comparison} (or a one-line rank-one compression) gives
\[
\mathsf{F}_t^{\mathrm{spec}}(\rho,\sigma)=\langle +|\sigma|+\rangle^{\,t}
=\Big(\tfrac12+\sqrt{p(1-p)}\Big)^{t}.
\]
Using \eqref{eq:classical} for the diagonal pair $(\Pi_B(\rho),\Pi_B(\sigma))$,
\[
\mathsf{F}_t^{\mathrm{spec}}\!\big(\Pi_B(\rho),\Pi_B(\sigma)\big)
=\Big(\tfrac12\Big)^{1-t}\!\big(p^t+(1-p)^t\big)
=2^{\,t-1}\,\big(p^t+(1-p)^t\big).
\]
Hence the desired strict inequality reduces to
\begin{equation}\label{eq:ineq-core}
\Big(\tfrac12+\sqrt{p(1-p)}\Big)^{t}\;>\;2^{\,t-1}\,\big(p^t+(1-p)^t\big).
\end{equation}
We prove \eqref{eq:ineq-core} by letting $p\downarrow 0$. As $p\to0$,
\[
\Big(\tfrac12+\sqrt{p(1-p)}\Big)^{t}\longrightarrow \Big(\tfrac12\Big)^{t}=2^{-t},
\qquad
2^{\,t-1}\,\big(p^t+(1-p)^t\big)\longrightarrow 2^{\,t-1}=2^{-(1-t)}.
\]
Since $t<\tfrac12$, we have $-t>-(1-t)$, thus $2^{-t}>2^{-(1-t)}$.
Therefore \eqref{eq:ineq-core} holds for all sufficiently small $p>0$, and so
\[
\mathsf{F}_t^{\mathrm{spec}}\!\big(\Pi_B(\rho),\Pi_B(\sigma)\big)\;<\;\mathsf{F}_t^{\mathrm{spec}}(\rho,\sigma).
\]
Finally, note that for these choices $\|\sigma-\Pi_B(\sigma)\|_\infty=\sqrt{p(1-p)}\to0$ as $p\downarrow0$ while $\Pi_B(\rho)=\frac12I$,
so $(\rho,\sigma)$ can be made arbitrarily close to the commuting pair $\big(\tfrac12 I,\,\mathrm{diag}(0,1)\big)$.
This proves the claim and the ``arbitrarily close'' remark for $t\in(0,\tfrac12)$.

When $t\in(\tfrac12,1)$, we use the flip symmetry $\mathsf{F}_t^{\mathrm{spec}}(\rho,\sigma)=\mathsf{F}_{1-t}^{\mathrm{spec}}(\sigma,\rho)$ and apply the previous case with the roles of $\rho$ and $\sigma$ interchanged. More explicitly, pick any $t\in(\tfrac12,1)$ and set $s:=1-t\in(0,\tfrac12)$. By the case just proved there exist qubit states $(\rho,\sigma)$ (arbitrarily close to commuting in $B$) such that
\[
\mathsf{F}_{s}^{\mathrm{spec}}\!\big(\Pi_B(\rho),\Pi_B(\sigma)\big)\;<\;\mathsf{F}_{s}^{\mathrm{spec}}(\rho,\sigma).
\]
Flipping the arguments and using $\Pi_B$ on each yields
\[
\mathsf{F}_{t}^{\mathrm{spec}}\!\big(\Pi_B(\sigma),\Pi_B(\rho)\big)\;<\;\mathsf{F}_{t}^{\mathrm{spec}}(\sigma,\rho),
\]
which is again a violation at the parameter $t$ (just rename the pair).

\medskip
Combining the two cases, DPI fails for every $t\in(0,1)\setminus\{\tfrac12\}$ under the pinching channel in the fixed basis $B$, with witnesses arbitrarily close to commuting pairs.
\end{proof}

\textbf{ Numerical qubit counterexample.}
Let $B$ be the computational basis and $t=0.8$. With
\[
\rho=\begin{pmatrix}
0.064925 & -0.022125-0.170483\,i\\
-0.022125+0.170483\,i & 0.935075
\end{pmatrix}, \] and
\[\sigma=\begin{pmatrix}
0.806863 & -0.317159-0.211863\,i\\
-0.317159+0.211863\,i & 0.193137
\end{pmatrix},
\]
a direct evaluation gives
$\mathsf{F}_{0.8}^{\mathrm{spec}}(\rho,\sigma)\approx 0.755086$ while
$\mathsf{F}_{0.8}^{\mathrm{spec}}\!\big(\Pi_B(\rho),\Pi_B(\sigma)\big)\approx 0.752207$.
Thus DPI fails for the (CPTP) pinching channel $\Pi_B$.
 
 \section{Remark on the Fuchs--van de Graaf inequalities for \(\mathsf{F}_t^{\mathrm{spec}}\)} \label{sec:fvg}
We conclude the paper  by establishing the first Fuchs--van de Graaf inequality for \(\mathsf{F}_t^{\mathrm{spec}}\). Moreover, we show that the second Fuchs--van de Graaf inequality fails in general for \(\mathsf{F}_t^{\mathrm{spec}}\) by exhibiting a counterexample.

\begin{theorem}[First Fuchs--van de Graaf inequality for $\mathsf{F}_t^{\mathrm{spec}}$]
\label{thm:FvG-left}
For all density operators $\rho,\sigma$ and all $t\in[0,1]$,
\[
1 - \mathsf{F}_t^{\mathrm{spec}}(\rho,\sigma)\ \le\ \tfrac12\,\|\rho-\sigma\|_1.
\]
\end{theorem}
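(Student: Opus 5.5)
Since the classical Fuchs--van de Graaf inequality gives $1-F(\rho,\sigma)\le \tfrac12\|\rho-\sigma\|_1$ for the Uhlmann fidelity $F=\mathsf{F}_{1/2}^{\mathrm{spec}}$, the natural plan is to compare $\mathsf{F}_t^{\mathrm{spec}}$ with $\mathsf{F}_{1/2}^{\mathrm{spec}}$. The convexity of Proposition~\ref{prop:convex} runs the wrong way: together with the endpoint values $\mathsf{F}_0^{\mathrm{spec}}=\mathsf{F}_1^{\mathrm{spec}}=1$, it only says that $\mathsf{F}_t^{\mathrm{spec}}$ lies \emph{below} the chord. We need a lower bound of the form $\mathsf{F}_t^{\mathrm{spec}}\ge \mathsf{F}_{1/2}^{\mathrm{spec}}$, or directly $\mathsf{F}_t^{\mathrm{spec}}\ge 1-\tfrac12\|\rho-\sigma\|_1$. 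So I would not argue through the $t$-profile. Instead I would give a direct lower bound.

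For invertible $\rho,\sigma$, put $X=\rho^{-1}\sharp\sigma$, so that $X\rho X=\sigma$ and $\mathsf{F}_t^{\mathrm{spec}}=\Tr(X^t\rho X^t)$. Diagonalize $X=\sum_i x_i P_i$ with $P_i=|e_i\rangle\langle e_i|$, and set $r_i=\langle e_i|\rho|e_i\rangle$ and $s_i=\langle e_i|\sigma|e_i\rangle=x_i^2 r_i$. Then
\[
\mathsf{F}_t^{\mathrm{spec}}(\rho,\sigma)=\sum_i x_i^{2t} r_i=\sum_i r_i^{1-t}s_i^{t}.
\]
This is exactly the classical weighted affinity of the two probability vectors $(r_i)$ and $(s_i)$, which are the outcome distributions of measuring $\rho$ and $\sigma$ in the eigenbasis of $X$. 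It is the same computation that underlies the classicalization identity \eqref{eq:classical}.

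The classical step then uses, for each $i$, the elementary inequality $a^{1-t}b^{t}\ge\min(a,b)$ for $a,b\ge0$, which holds because a weighted geometric mean lies between its arguments. Summing gives
\[
\mathsf{F}_t^{\mathrm{spec}}\ge\sum_i\min(r_i,s_i)=1-\tfrac12\sum_i|r_i-s_i|.
\]
Finally, the classical total variation distance of the outcome distributions of any measurement is at most the trace distance: $\sum_i|r_i-s_i|=\sum_i|\langle e_i|(\rho-\sigma)|e_i\rangle|\le\|\rho-\sigma\|_1$. Pinching is trace-norm contractive, or equivalently, by Helstrom's bound. This yields the claim for all $t\in[0,1]$ at once.

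The main obstacle is the boundary case of singular $\rho$ or $\sigma$, where $\rho^{-1}$ and hence $X$ are only defined on supports. I would handle it as in the proof of Theorem~\ref{thm:variational}. Replace the states by $\rho_\varepsilon=(1-\varepsilon)\rho+\varepsilon I/d$ and $\sigma_\varepsilon$ defined similarly, apply the inequality, and let $\varepsilon\downarrow0$. Here $\|\rho_\varepsilon-\sigma_\varepsilon\|_1=(1-\varepsilon)\|\rho-\sigma\|_1$. The remaining point is that $\mathsf{F}_t^{\mathrm{spec}}(\rho_\varepsilon,\sigma_\varepsilon)$ converges to the value of $\mathsf{F}_t^{\mathrm{spec}}(\rho,\sigma)$ under the paper's support convention. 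I expect this continuity to require some care, possibly passing to a subsequence along which the eigenprojections of $X_\varepsilon$ converge. Alternatively, one can rerun the diagonalization argument directly on $\supp\rho$ with the support conventions used earlier.
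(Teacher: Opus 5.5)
Your proof is correct for invertible $\rho$, and it takes a genuinely different route from the paper's. In fact it avoids an actual error in the paper's proof.

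The paper argues through exactly the $t$-profile you chose to avoid. It combines log-convexity with the flip symmetry to assert that $t\mapsto\log\mathsf{F}_t^{\mathrm{spec}}(\rho,\sigma)$ is symmetric about $t=\tfrac12$. From this it concludes $\mathsf{F}_t^{\mathrm{spec}}\ge\mathsf{F}_{1/2}^{\mathrm{spec}}=\mathsf{F}_{\mathrm U}$, and then quotes the Uhlmann Fuchs--van de Graaf bound. But the flip symmetry exchanges $\rho$ and $\sigma$, so it gives no symmetry of the profile for a fixed pair. The comparison $\mathsf{F}_t^{\mathrm{spec}}\ge\mathsf{F}_{1/2}^{\mathrm{spec}}$ is in fact false. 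For $\rho=\tfrac12 I$ and $\sigma=\mathrm{diag}(1-\delta,\delta)$ one gets $\mathsf{F}_t^{\mathrm{spec}}=2^{t-1}\big((1-\delta)^t+\delta^t\big)$. At $\delta=10^{-4}$ this is about $0.654$ for $t=\tfrac14$, against about $0.714$ for $t=\tfrac12$.

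Your observation that convexity runs the wrong way is therefore exactly right. Your identity $\mathsf{F}_t^{\mathrm{spec}}=\sum_i r_i^{1-t}s_i^{t}$ is what actually makes the theorem true; it is computed in the $t$-independent eigenbasis of $X$ and uses only $X\rho X=\sigma$. The rest of the chain is sound: $r^{1-t}s^t\ge\min(r,s)$, then $\sum_i\min(r_i,s_i)=1-\tfrac12\sum_i|r_i-s_i|$, then contractivity of the trace norm under pinching.

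Your route also gives more than the statement. You get the sharper bound with the trace distance of the measured distributions $(r_i)$, $(s_i)$ on the right. You also get convexity and log-convexity in $t$ for free, since $\sum_i r_i\,(s_i/r_i)^t$ is a positive combination of exponentials.

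On the boundary case, first note that your argument needs only $\rho>0$, because $X\rho X=\sigma$ also holds when $\sigma$ is singular. For singular $\rho$, you should not try to match the limit of $\mathsf{F}_t^{\mathrm{spec}}(\rho_\varepsilon,\sigma_\varepsilon)$ with the paper's support conventions, because those conventions are not consistent. For two pure states with overlap $c$, part (i) of the pure-state closed forms gives $c^{2t}$, while part (ii) gives $c^{2(1-t)}$.

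Two observations close the case:
\begin{itemize}
\item If the singular value is defined as a limit of regularizations, you need not identify the limit. Every limit point inherits your inequality, because $\|\rho_\varepsilon-\sigma_\varepsilon\|_1=(1-\varepsilon)\|\rho-\sigma\|_1$.
\item For the explicit rank-one formulas, a direct check suffices. For pure $\rho$, $p^t\ge p$ and $1-p=\Tr\big(\rho(\rho-\sigma)\big)\le\tfrac12\|\rho-\sigma\|_1$. Symmetrically, for pure $\sigma$, $q^{1-t}\ge q$ and $1-q\le\tfrac12\|\rho-\sigma\|_1$.
\end{itemize}
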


\begin{proof}
 
By flip symmetry $\mathsf{F}_t^{\mathrm{spec}}(\rho,\sigma)=\mathsf{F}_{1-t}^{\mathrm{spec}}(\sigma,\rho)$ and by
Proposition~\ref{lem:logconvex} the map $t\mapsto \mathsf{F}_t^{\mathrm{spec}}(\rho,\sigma)$ is log-convex.
Hence $t\mapsto \log \mathsf{F}_t^{\mathrm{spec}}(\rho,\sigma)$ is convex and symmetric about $t=\tfrac12$,
so $\mathsf{F}_t^{\mathrm{spec}}(\rho,\sigma)$ attains its \emph{minimum} at $t=\tfrac12$:
\begin{equation}\label{fid}
\mathsf{F}_t^{\mathrm{spec}}(\rho,\sigma)\ \ge\ \mathsf{F}_{1/2}^{\mathrm{spec}}(\rho,\sigma)\qquad(\forall\,t\in[0,1]).
\end{equation}
 
At $t=\tfrac12$, $\mathsf{F}_{1/2}^{\mathrm{spec}}(\rho,\sigma)$ equals the (unsquared) Uhlmann fidelity
$\mathsf{F}_{\mathrm U}(\rho,\sigma)=\Tr\!\sqrt{\rho^{1/2}\sigma\rho^{1/2}}$.

Now apply the standard (first) Fuchs--van de Graaf inequality for the Uhlmann
fidelity:
\[
1 - \mathsf{F}_{\mathrm U}(\rho,\sigma)\ \le\ \tfrac12\,\|\rho-\sigma\|_1.
\]
Combining with (\ref{fid}) gives
\[
1 - \mathsf{F}_t^{\mathrm{spec}}(\rho,\sigma)
\ \le\ 1 - \mathsf{F}_{1/2}^{\mathrm{spec}}(\rho,\sigma)
\ =\ 1 - \mathsf{F}_{\mathrm U}(\rho,\sigma)
\ \le\ \tfrac12\,\|\rho-\sigma\|_1,
\]
as claimed. The inequality is trivial at the endpoints $t=0,1$ because $\mathsf{F}_t^{\mathrm{spec}}(\rho,\sigma)=1$.
\end{proof}

Recall that  the second Fuchs--van de Graaf inequality for Uhlmann/root fidelity states that for all density operators $\rho,\sigma$,
\[
\tfrac12\|\rho-\sigma\|_1 \le \sqrt{\,1 - \mathsf{F}_{\mathrm U}(\rho,\sigma)^2\,}.
\]  

Unfortunately, a similar inequality for $\mathsf{F}_t^{\mathrm{spec}}$ does not hold in general.  
For $t\in[0,1]$, we have $\mathsf{F}_t^{\mathrm{spec}}(\rho,\sigma)\ge \mathsf{F}_{1/2}^{\mathrm{spec}}(\rho,\sigma)$
(see the log-convexity/flip-symmetry argument). Since $x\mapsto \sqrt{1-x^2}$ is strictly
decreasing on $[0,1]$, the right-hand side $\sqrt{1-(\mathsf{F}_t^{\mathrm{spec}})^2}$ is \emph{smaller}
than $\sqrt{1-(\mathsf{F}_{1/2}^{\mathrm{spec}})^2}$. Thus the putative stronger bound
\[
\frac12\|\rho-\sigma\|_1 \stackrel{?}{\le} \sqrt{\,1-\big(\mathsf{F}_{t}^{\mathrm{spec}}(\rho,\sigma)\big)^2\,}
\qquad (t\neq\tfrac12)
\]
is generally false.

A concrete counterexample is provided by pure states. Let $\rho=|\psi \rangle \langle \psi|$ and
$\sigma=|\phi \rangle \langle \phi|$ with overlap $c:=|\langle\psi|\phi\rangle|\in(0,1)$. Then
\[
\mathsf{F}_t^{\mathrm{spec}}(\rho,\sigma)=c^{2t},\qquad
\mathsf{F}_{1/2}^{\mathrm{spec}}(\rho,\sigma)=c,\qquad
\tfrac12\|\rho-\sigma\|_1=\sqrt{1-c^2}.
\]
For $t<\tfrac12$, we have $c^{2t}>c$, hence
$\sqrt{1-(\mathsf{F}_t^{\mathrm{spec}})^2}=\sqrt{1-c^{4t}}<\sqrt{1-c^2}=\tfrac12\|\rho-\sigma\|_1$,
violating the desired inequality. (For $t>\tfrac12$ the same issue can occur unless $c$ is
extreme.) 

\section*{Declarations}

\textbf{Ethics declaration:} Not applicable.

\end{document}